\def\cleardoublepage{\clearpage\if@twoside \ifodd\c@page\else%
         \hbox{}%
     \thispagestyle{empty}
     \newpage%
     \if@twocolumn\hbox{}\newpage\fi\fi\fi}
\let\cleardoublepage\clearpage
\newtheorem{thm}{Theorem}[section]
\newtheorem{cor}[thm]{Corollary}
\newtheorem{lem}[thm]{Lemma}
\newtheorem{den}[thm]{Definition}
\newtheorem{oss}[thm]{Remark}
\numberwithin{equation}{section}
\begin{document}

\title[PME on manifolds with critical negative curvature]{Porous medium equations on manifolds \\ with critical negative curvature: unbounded  initial data}

\author[Matteo Muratori]{Matteo Muratori} \author[Fabio Punzo]{Fabio Punzo}


\address{Matteo Muratori and Fabio Punzo: Dipartimento di Matematica, Politecnico di Milano, Piaz\-za Leonardo da Vinci 32, 20133 Milano, Italy}
\email{matteo.muratori@polimi.it}
\email{fabio.punzo@polimi.it}

\maketitle

\scriptsize
\noindent{\bf Abstract.}
We investigate existence and uniqueness of solutions of the Cauchy problem for the porous medium equation on a class of Cartan-Hadamard manifolds. We suppose that the radial Ricci curvature, which is everywhere nonpositive as well as sectional curvatures, can diverge negatively at infinity with an at most \emph{quadratic} rate: in this sense it is referred to as \emph{critical}. The main novelty with respect to previous results is that, under such hypotheses, we are able to deal with \emph{unbounded} initial data and solutions. Moreover, by requiring a matching bound from above on sectional curvatures, we can also prove a \emph{blow-up} theorem in a suitable weighted space, for initial data that grow sufficiently fast at infinity. 

\normalsize

\section{Introduction}
We are concerned with existence and uniqueness of solutions of the following Cauchy problem for the \emph{porous medium equation} (PME for short) on suitable Riemannian manifolds:
\begin{equation}\label{e64}
\begin{cases}
u_t = \Delta\!\left(u^m\right) & \text{in } M \times (0,T) \, , \\
u = u_0 & \text{on } M \times \{0\} \, ,
\end{cases}
\end{equation}
where the initial datum $u_0$ is allowed to be \emph{unbounded}, $M$ is an $N$-dimensional complete, simply connected Riemannian manifold with everywhere nonpositive sectional curvatures (i.e.~a \emph{Cartan-Hadamard manifold}) and $\Delta$ is the Laplace-Beltrami operator on $M$. The power $ m $ is assumed to be strictly larger than $1$, and when dealing with sign-changing solutions we mean $ u^m := |u|^{m-1}u $. The initial data we consider that can have a specific growth at infinity, which will be discussed in detail below, and therefore one expects solutions, in general, to exist up to a certain finite time $ T>0 $ at which some \emph{blow-up} phenomena may occur. An analogous problem had originally been addressed in the celebrated paper \cite{BCP} in the case $M \equiv \mathbb R^N$, by means of techniques that seem not to adapt to the present context. Recently, quasilinear degenerate (or singular) parabolic equations of the type of \eqref{e64} on Riemannian manifolds have attracted a lot of interest: with no claim to completeness, we quote e.g.~\cite{Zhang,Pu0, Pu1, Pu02, Pu2,GMhyp,MPT,VazH, GM2,GMV,MMP,GMPlarge,GMPrm} (see also the significant papers \cite{M1,I,IM,I2,M2} in the purely parabolic case). In particular, in \cite{GMPlarge} problem \eqref{e64} has thoroughly been studied under the assumption
\begin{equation}\label{Ric-I}
\mathrm{Ric}_o(x) \geq - C_o \left(1+\mathrm{d}(x,o)^{\gamma}\right) \qquad \forall x \in M 
\end{equation}
for some $ \gamma \in (-\infty,2]$ and $ C_o>0$, where $\mathrm{Ric}_o$ stands for the \emph{Ricci curvature} evaluated in the radial direction associated with a \emph{fixed} point $o\in M$ (the \emph{pole}) and $ \mathrm{d}(\cdot,\cdot) $ is the distance on $M$. Let
\begin{equation*}\label{m3}
\sigma := \frac{2-\gamma}{2} \wedge 2 \qquad \text{and} \qquad \rho(x):= \mathrm{d}(x,o) \qquad \forall x \in M \, .
\end{equation*}
In \cite[Theorem 2.2]{GMPlarge} it is shown that if the initial datum complies with the growth estimate
\[
\left| u_0(x) \right| \leq C \left(1+\rho(x)\right)^{\frac{\sigma}{m-1}} \qquad \text{for a.e. } x\in M
\]
for some $ C>0 $, then problem \eqref{e64} admits a solution that exists at least up to a certain $T>0$ (which can be estimated depending on the behavior of $ u_0(x) $ at infinity) and satisfies
\begin{equation}\label{aa40}
|u(x,t)| \leq C_\varepsilon \left(1+ \rho(x)\right)^{\frac{\sigma}{m-1}} \qquad \text{for a.e. } (x,t) \in M \times (0,T-\varepsilon) \, ,
\end{equation}
where $ \varepsilon>0 $ can be taken arbitrarily small and $ C_\varepsilon>0 $ is locally bounded and blows up as $ \varepsilon \downarrow 0 $. Moreover, Theorem 2.3 of \cite{GMPlarge} asserts that such a solution is \emph{unique} in the class of solutions fulfilling \eqref{aa40}. Finally, in Theorems 2.5 and 2.7 of the same paper a further analysis of the \emph{maximal existence time} has been carried out: basically, it is shown that under a bound from \emph{above} on \emph{sectional curvatures} analogous to \eqref{Ric-I}, the estimate on $T$ is sharp, so there are solutions that do cease to exist in finite time.

Note that if $\gamma=2$, namely $\sigma=0$, from the aforementioned results of \cite{GMPlarge} it follows that problem \eqref{e64} is well posed, for globally bounded initial data, in $L^\infty(M \times (0, T))$ (for any $ T>0 $ actually). Accordingly, the choice $\gamma=2$ can be referred to as \emph{critical}, in the sense that when $\gamma>2$ nonuniqueness of bounded solutions prevails: see e.g.~\cite[Remark 2.4]{GMPlarge} and \cite[Remark 3.12 and Theorem 3.8]{PuAA}. The same dichotomy occurs in the linear case ($m=1$). Indeed, if $\gamma\leq 2$ the associated manifold $M$ is \emph{stochastically complete}, which is equivalent to uniqueness of bounded solutions of the Cauchy problem for the \emph{heat equation}; on the other hand, if $\gamma>2$, $M$ is in general \emph{stochastically incomplete}.

\smallskip
The goal of the present paper is to investigate existence and uniqueness of \emph{unbounded} (very weak) solutions of problem \eqref{e64} in the critical case $\gamma=2$. More precisely, throughout we assume that:
\begin{equation} \tag{{\sf H}} \label{H}
\begin{cases}
\text{(i)} & M \ \text{is a Cartan-Hadamard manifold of dimension } N \ge 2 ; \\
\text{(ii)} & \mathrm{Ric}_o(x) \geq -C_o \left(1+ \mathrm{d}(x,o)^2 \right) \ \text{for some } C_o>0, \ \text{for every } x \in M.
\end{cases}
\end{equation}
We shall prove (Theorem \ref{thmexi}) that if $ u_0 $ complies with the \emph{logarithmic-type} growth estimate 
\begin{equation*}\label{e20w}
\left|u_0(x)\right| \leq C \left[ \log\!\left (2 + \rho(x) \right) \right]^{\frac{1}{m-1}} \qquad \text{for a.e. } x \in M \, ,
\end{equation*}
for some $ C>0 $, then there exists a solution $u$ of \eqref{e64} satisfying
\begin{equation}\label{e20z}
\left|u(x,t)\right| \leq C_\varepsilon \left[\log\left(2+ \rho(x)\right)\right]^{\frac{1}{m-1}} \qquad \text{for a.e. } (x,t) \in M \times (0, T-\varepsilon)\,,
\end{equation}
where $ T > 0 $ is a suitable existence time that depends on $ u_0 $, while $ C_\varepsilon>0 $ is a locally bounded constant blowing up as $ \varepsilon \downarrow 0 $. As concerns uniqueness within the class of solutions fulfilling \eqref{e20z}, we are able to establish it (Theorem \ref{thmuniq}) under the following slightly stronger requirement: 
\begin{equation}\label{aa1}
\textrm{Ric}_o(x) \geq - C_o \, \frac{1+ \mathrm{d}(x,o)^2}{\left[\log(2+\mathrm{d}(x,o))\right]^2} \qquad \forall x \in M \, , \quad \text{for some } C_o > 0 \, .
\end{equation}
To prove the above results, we exploit basically the same line of arguments of \cite{GMPlarge}; however, we improve some of the techniques used there, in order to treat unbounded solutions. The issue of maximal existence time here is more delicate, since in \cite{GMPlarge} it deeply relied on a uniqueness result that is not available in the present context. Nevertheless, by different methods, under the additional assumption that the analogue of \eqref{H}-(ii) holds for sectional curvatures, with reverse inequality, we show that there exist solutions for which \eqref{e20z} does fail to hold after a finite time, giving rise to a sort of ``blow-up'' phenomenon at spatial infinity: see Theorem \ref{opt1} and Remark \ref{ptwse-blow} for a more complete discussion. 

\smallskip
Problem \eqref{e64} with $ M \equiv \mathbb R^N$ naturally arises in various fields of science, such as physics and biology: for a quite complete reference, see e.g.~\cite[Chapter 2]{Vaz07}. Originally, it appears as a model to describe the flow of a polytropic gas which diffuses through a \emph{porous medium}, whence the name of the equation. In fact the PME itself is obtained by combining the continuity equation of the gas (perturbed by the \emph{porosity} of the medium), \emph{Darcy's law} (an empirical relation between velocity and pressure) and the state equation for \emph{perfect gases}. In this framework, $u$ is the \emph{density} of the gas and $ p=u^{m-1} $ (up to multiplicative constants) is the \emph{pressure}. It is worth mentioning \cite{Mus}, one of the very first engineering treatises on the flow of fluids through porous media, which led to the PME. From the physical point of view, an appreciable feature of the PME is that compactly supported data expand with \emph{finite speed}: for details see again \cite[Introduction]{Vaz07}. The problem we address here, to some extent, could be associated with a PME flow (e.g.~arising from a fluid) taking place on a complete noncompact \emph{surface} $\mathcal T \subset \mathbb R^3$. In this case one obtains \eqref{e64} with $ M \equiv \mathcal T$, for which the metric, and so the Laplace-Beltrami operator, is the one induced by the embedding of $\mathcal T$ in $\mathbb R^3$. Our main results ensure, in particular, the well-posedness of problems of this type in the case of a pressure that is allowed to \emph{grow} at infinity with a logarithmic rate. 

\section{Preliminaries, assumptions and statements of the main results}\label{Stat}

In the following, we shall briefly introduce and describe all the mathematical tools necessary to our discussion, both at the level of geometry and analysis. We shall then state our results as concerns existence, uniqueness and blow-up for solutions of \eqref{e64}.

\subsection{Geometric framework} \label{RG}
Let $M$ be a complete, noncompact Riemannian manifold of dimension $ N \ge 2 $. Let $\Delta$
denote the Laplace-Beltrami operator, $\nabla$ the Riemannian
gradient and $d\mathcal{V}$ the Riemannian volume element on $M$.
Here we shall restrict ourselves to \emph{Cartan-Hadamard} manifolds, i.e.~complete, noncompact and simply connected Riemannian manifolds with everywhere \emph{nonpositive} sectional curvatures. We recall that on Cartan-Hadamard manifolds the \emph{cut locus} of any point $o\in M $ is empty and the exponential map is a diffeomorphism between $ \mathbb{R}^N $ and $ M $ (see e.g.~\cite{Grig,Grig3}). Hence, given any $x\in M\setminus \{o\}$, one can well define its \emph{polar coordinates} with respect to the pole $o$, namely $\rho(x) := \mathrm{d}(x, o)$ and $\theta\in \mathbb S^{N-1}$. Accordingly, we denote by $B_R$ the Riemannian ball of radius $R$ centered at $o$ and set $ S_R:=\partial B_R $. Through such polar coordinates, the Laplace-Beltrami operator reads
\begin{equation}\label{e1}
\Delta = \frac{\partial^2}{\partial \rho^2} + \mathsf{m}(\rho, \theta) \, \frac{\partial}{\partial \rho} + \Delta_{S_{\rho}} \, ,
\end{equation}
where $ \mathsf{m}(\rho, \theta)$ is a suitable function related to the metric tensor, which equals the Laplacian of the distance function $ x \mapsto \rho(x)  $, and $ \Delta_{S_{\rho}} $ is the Laplace-Beltrami operator on the submanifold $ S_{\rho} $.

Let
$$
\mathcal A := \left\{ \psi \in C^\infty((0,\infty)) \cap C^1([0,\infty)): \ \psi^\prime(0)=1 \, , \ \psi(0)=0 \, , \ \psi>0 \quad \text{in} \ (0,\infty) \right\} .
$$
We say that $M$ is a \emph{model manifold} (or \emph{model} for short) if the Riemannian metric $ ds $ at every $ x \equiv (\rho,\theta) $ is given by
\begin{equation*}\label{e2}
ds^2 = d\rho^2 + \psi(\rho)^2 \, d\theta^2 \, ,
\end{equation*}
where $d\theta^2$ stands for the standard metric on the unit sphere $\mathbb S^{N-1}$ and $\psi\in \mathcal A$. In this case, we write concisely $M\equiv M_\psi$. The Laplace-Beltrami operator on $ M_\psi $ has a simple form, which we shall often exploit below:
\begin{equation*}\label{mm43}
\Delta = \frac{\partial^2}{\partial \rho^2} + (N-1) \, \frac{\psi'(\rho)}{\psi(\rho)} \, \frac{\partial}{\partial\rho} + \frac1{\psi(\rho)^2} \, \Delta_{\mathbb{S}^{N-1}} \, .
\end{equation*}

Now we need to deal with curvatures. For any $x\in M\setminus\{o\}$, we denote by $\mathrm{Ric}_o(x)$ the \emph{Ricci curvature} at $x$ in the radial direction $\frac{\partial}{\partial\rho}$. Let $\omega$ be any pair of vectors from the tangent subspace $T_x M$ of the form
$$ \omega = \left(\frac{\partial}{\partial \rho} , V \right) , \qquad \text{where $V$ is any unit vector orthogonal to $\frac{\partial}{\partial\rho}$\,.} $$
We denote by $\mathrm{K}_{\omega}(x)$ the \emph{sectional curvature} at $x\in M$ of the $2$-section determined by $\omega$.
On a model manifold $M_\psi$, for every $x\equiv(\rho, \theta)\in M_\psi\setminus\{o\}$ such curvatures have the following explicit expressions:
\begin{equation}\label{e1ce}
\mathrm{K}_{\omega}(x) =-\frac{\psi^{\prime\prime}(\rho)}{\psi(\rho)} \, , \qquad \mathrm{Ric}_{o}(x) = - (N-1) \, \frac{\psi^{\prime\prime}(\rho)}{\psi(\rho)} \, .
\end{equation}
In particular, for $ M_\psi $ to be a Cartan-Hadamard manifold it is necessary to ask that $ \psi $ is convex, namely $ \psi^{\prime\prime} \geq 0 $ in $(0, \infty)$ (actually the condition is also sufficient).

Thanks to \eqref{e1ce}, straightforward computations show that \eqref{H} is satisfied e.g.~by Riemannian models $ M_\psi $ associated with convex functions $\psi$ such that
\begin{equation}\label{e50z}
\psi(\rho) = \exp\left\{ f(\rho) \right\} , \qquad f(\rho) \sim C \, \rho^{2}  \quad \text{as } \rho \to \infty \, ,
\end{equation}
where $C>0$ and $ f(\rho) \sim g(\rho) $ stands for $ \lim_{\rho \to \infty} f(\rho)/g(\rho) = 1 $. Of course Cartan-Hadamard manifolds complying with the curvature assumption \eqref{H}-(ii) can be much more general than models. Nevertheless, there are crucial Laplacian-comparison results (we refer the reader to \cite[Section 15]{Grig} or \cite[Section 2.2]{GMPlarge} for a short review) that allow us to \emph{compare} the function $ \mathsf{m}(\rho,\theta) $ on $ M $ with the corresponding function, which reads $ (N-1)\psi^\prime(\rho)/\psi(\rho) $, of a model manifold satisfying the same curvature bounds as equalities. We shall not expand this point further and only illustrate the main consequences under our specific hypotheses. First of all, by comparison with Euclidean space, on any Cartan-Hadamard manifold there holds
\begin{equation}\label{e5}
\mathsf{m}(\rho,\theta) \ge \frac{N-1}{\rho} \qquad \forall x \equiv (\rho,\theta) \in M \setminus \{ o \} \, .
\end{equation}
Moreover, by comparison with models $ M_\psi $ associated with functions $ \psi \in \mathcal{A} $ as in \eqref{e50z}, we have the following results (see e.g.~\cite[Lemma 3.3 and Lemma 5.1]{GMPlarge}).
\begin{lem}\label{prop-comp-ricci}
Let assumption \eqref{H} hold. Then there exists a positive constant $ C^\prime $, depending on $ C_o$ and $N$, such that
\begin{equation}\label{eq: m-est-above}
\mathsf{m}(\rho,\theta) \le C^\prime\,\frac{1+\rho^2}{\rho} \qquad \forall x \equiv (\rho,\theta) \in M \setminus \{ o \} \, .
\end{equation}
\end{lem}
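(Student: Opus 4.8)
The plan is to invoke the Laplacian comparison theorem to bound $\mathsf{m}$ by the analogous quantity on an explicit model, and then to control the latter by a short ODE computation. First I would set $h(\rho) := \tfrac{C_o}{N-1}\left(1+\rho^2\right)$ and let $\psi$ be the solution of $\psi'' = h\,\psi$ in $(0,\infty)$ with $\psi(0)=0$ and $\psi'(0)=1$; since $h$ is a polynomial, $\psi$ is smooth on $[0,\infty)$. As $h\ge 0$ and $\psi'(0)=1$, the identity $\psi''=h\psi$ forces $\psi'$ to be nondecreasing as long as $\psi\ge 0$, so $\psi'\ge 1$ and hence $\psi>0$ on all of $(0,\infty)$; thus $\psi\in\mathcal A$, the model $M_\psi$ is Cartan-Hadamard (because $\psi''\ge 0$), and by \eqref{e1ce} its radial Ricci curvature equals $-(N-1)\psi''(\rho)/\psi(\rho)=-C_o(1+\rho^2)$. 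By \eqref{H}-(ii) this is a lower bound for $\mathrm{Ric}_o$ on $M$, so the Laplacian comparison theorem (see \cite[Section 15]{Grig} or \cite[Section 2.2]{GMPlarge}) gives $\mathsf{m}(\rho,\theta)\le (N-1)\,\psi'(\rho)/\psi(\rho)$ for every $x\equiv(\rho,\theta)\in M\setminus\{o\}$.

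It then remains to estimate $\psi'/\psi$. Using $\psi''=h\psi$ and that $h$ is nondecreasing, I would write
\[
\bigl(\psi'(\rho)\bigr)^2 = 1 + \int_0^\rho 2\,h(s)\,\psi(s)\,\psi'(s)\, {\rm d}s \le 1 + h(\rho)\int_0^\rho 2\,\psi(s)\,\psi'(s)\, {\rm d}s = 1 + h(\rho)\,\psi(\rho)^2 \, ,
\]
so that $\psi'(\rho)\le 1+\sqrt{h(\rho)}\,\psi(\rho)$; dividing by $\psi(\rho)\ge\rho$ (which follows from $\psi(0)=0$ and $\psi'\ge 1$) gives
\[
\frac{\psi'(\rho)}{\psi(\rho)} \le \frac1\rho + \sqrt{\tfrac{C_o}{N-1}}\,(1+\rho) \, .
\]
Since $\tfrac1\rho\le\tfrac{1+\rho^2}\rho$ and $1+\rho\le 2\,\tfrac{1+\rho^2}\rho$ for every $\rho>0$ (the latter by splitting the cases $\rho\le 1$ and $\rho\ge 1$), this yields \eqref{eq: m-est-above} with, for instance, $C'=(N-1)+2\sqrt{C_o(N-1)}$, which depends only on $C_o$ and $N$.

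The only genuine ingredient is the Laplacian comparison theorem recalled above; everything else is elementary. I do not expect a serious obstacle: the point that needs a little care is simply checking that the comparison model is admissible — that is, $\psi\in\mathcal A$ and $M_\psi$ Cartan-Hadamard — so that the comparison theorem applies in the stated form, and this is exactly what the monotonicity of $\psi'$ secures. Conceptually, the heart of the matter is that the warping function $\psi$, although it grows roughly like $\exp(c\,\rho^2)$, has logarithmic derivative only of order $\rho$ at infinity and of order $1/\rho$ near the pole, which is precisely the order of $(1+\rho^2)/\rho$.
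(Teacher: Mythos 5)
Your proof is correct and follows exactly the route the paper intends: the paper does not reprove this lemma but cites \cite[Lemma 3.3]{GMPlarge}, whose content is precisely the Laplacian comparison with the model $\psi''=\tfrac{C_o}{N-1}(1+\rho^2)\psi$ that you set up. Your self-contained estimate $\left(\psi'\right)^2\le 1+h\,\psi^2$ (hence $\psi'/\psi\le \psi^{-1}+\sqrt{h}$ together with $\psi\ge\rho$) is a clean and valid way to bound the logarithmic derivative of the warping function, and the resulting constant $C'=(N-1)+2\sqrt{C_o(N-1)}$ depends only on $C_o$ and $N$ as required.
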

\begin{lem}\label{prop-comp-sect}
Let \eqref{H}-\textnormal{(i)} hold. Assume in addition that
\begin{equation*}\label{assumption-sectional-prop}
\operatorname{K}_{\omega}(x) \leq - K_{o} \, \mathrm{d}(x,o)^2 \qquad \forall x \in M \setminus B_{R_o}
\end{equation*}
for some $ K_{o} , R_o > 0 $. Then there exists a positive constant $ C^{\prime\prime} $, depending on $K_{o},R_o,N$, such that
\begin{equation}\label{mm5}
\mathsf{m}(\rho,\theta) \ge C^{\prime\prime}\,\frac{1+\rho^2}{\rho} \qquad  \forall x \equiv (\rho,\theta) \in M \setminus \{ o \} \, .
\end{equation}
\end{lem}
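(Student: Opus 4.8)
The plan is to invoke the classical Hessian (equivalently, Laplacian) comparison theorem, which converts an \emph{upper} bound on sectional curvatures into a \emph{lower} bound on $\mathsf{m}(\rho,\theta)=\Delta\rho$, together with an elementary Riccati/ODE estimate; the only nonroutine point is that the curvature hypothesis is available only outside $B_{R_o}$, so the comparison must be propagated from radius $R_o$ onwards rather than from the pole. First I would settle the range $\rho\in(0,R_o]$: the Hessian comparison with Euclidean space — valid on every Cartan--Hadamard manifold and already underlying \eqref{e5} — gives $\operatorname{Hess}\rho(x)\ge \rho(x)^{-1}\left(g-\mathrm d\rho\otimes\mathrm d\rho\right)$ for all $x\ne o$, so that $\mathsf{m}(\rho,\theta)\ge (N-1)/\rho\ge \tfrac{N-1}{1+R_o^2}\,\tfrac{1+\rho^2}{\rho}$ on $B_{R_o}\setminus\{o\}$, using $1+\rho^2\le 1+R_o^2$ there.

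Next, fix $\theta\in\mathbb S^{N-1}$, let $\gamma_\theta$ be the unit-speed radial geodesic from $o$ in direction $\theta$, and let $\mathcal S(t)$ be the shape operator of the geodesic sphere $S_t$ at $\gamma_\theta(t)$ — a symmetric endomorphism of the orthogonal complement of $\dot\gamma_\theta$, with $\operatorname{tr}\mathcal S(t)=\mathsf{m}(t,\theta)$, solving the matrix Riccati equation $\mathcal S'+\mathcal S^2+\mathcal R=0$, $\mathcal R:=R(\cdot,\dot\gamma_\theta)\dot\gamma_\theta$. The hypothesis $\operatorname{K}_\omega(x)\le -K_o\,\rho(x)^2$ for $\rho>R_o$ reads $\mathcal R\le -K_o\,t^2\,\mathrm{Id}$ for $t>R_o$. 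I would then take $\psi$ solving $\psi''=K_o\,t^2\,\psi$ on $[R_o,\infty)$ with $\psi(R_o)=R_o$, $\psi'(R_o)=1$; thus $\psi>0$, $\psi'\ge 1$, the function $h:=\psi'/\psi$ satisfies $h'=K_o t^2-h^2$ with $h(R_o)=1/R_o$, and $\mathcal S_\psi:=h\,\mathrm{Id}$ solves the matrix Riccati equation with potential $-K_o t^2\,\mathrm{Id}\ge \mathcal R$. Since the Euclidean Hessian bound at $t=R_o$ gives $\mathcal S(R_o)\ge R_o^{-1}\,\mathrm{Id}=\mathcal S_\psi(R_o)$, the standard matrix Riccati comparison on $[R_o,\infty)$ (Eschenburg--Heintze type) yields $\mathcal S(t)\ge \mathcal S_\psi(t)$, whence, taking traces,
\[
\mathsf{m}(\rho,\theta)\ \ge\ (N-1)\,h(\rho)\qquad\text{for all }\rho\ge R_o .
\]

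It then remains to bound $h$ below by a multiple of $\rho$. I would fix $\alpha=\alpha(K_o,R_o)\in(0,\sqrt{K_o})$ small enough that $\alpha\le 1/R_o^2$ and $\alpha\le (K_o-\alpha^2)R_o^2$ (possible since the right-hand side of the last inequality tends to $K_oR_o^2>0$ as $\alpha\downarrow0$). Then $\bar h(t):=\alpha t$ is a subsolution of $h'=K_o t^2-h^2$ on $[R_o,\infty)$ — indeed $\bar h'=\alpha\le (K_o-\alpha^2)t^2=K_o t^2-\bar h^2$ there — with $\bar h(R_o)=\alpha R_o\le 1/R_o=h(R_o)$; applying a Gronwall estimate to $w:=h-\bar h$, which satisfies $w'\ge -(h+\bar h)\,w$ and $w(R_o)\ge0$, forces $h(t)\ge\alpha t$ for every $t\ge R_o$. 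Hence $\mathsf{m}(\rho,\theta)\ge (N-1)\alpha\rho\ge \tfrac{(N-1)\alpha R_o^2}{1+R_o^2}\,\tfrac{1+\rho^2}{\rho}$ for $\rho\ge R_o$, using $\rho+\rho^{-1}\le (1+R_o^{-2})\rho$ there; and since $\alpha R_o^2\le1$, the constant $C''=\tfrac{(N-1)\alpha R_o^2}{1+R_o^2}$, depending only on $K_o$, $R_o$, $N$, also works on $B_{R_o}\setminus\{o\}$, which completes the proof.

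I expect the delicate step to be the Riccati comparison of the third paragraph: because curvature is controlled only for $\rho>R_o$, one cannot compare directly with a model manifold centred at $o$, so one must propagate the sphere's shape operator from radius $R_o$, seeding the comparison with the scale-invariant Euclidean Hessian bound. An alternative that avoids the partial-range issue is to replace $K_o t^2$ by the continuous function $K_o(t^2-R_o^2)_+$, for which $-K_o(t^2-R_o^2)_+\ge \operatorname{K}_\omega$ \emph{everywhere}, and then invoke the global Hessian comparison theorem together with a slightly heavier version of the above ODE estimate; in either case the substance is the WKB-type fact that $\psi''=K_o t^2\psi$ forces $\psi'/\psi\gtrsim\rho$ at infinity.
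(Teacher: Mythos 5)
Your proof is correct, and it is in substance the same argument as the one the paper relies on: the paper simply cites \cite[Lemma 5.1]{GMPlarge}, whose content is precisely the Laplacian (Hessian/Riccati) comparison with the model $\psi''=K_o t^2\psi$, propagated from radius $R_o$ and seeded with the Euclidean bound $\mathsf{m}\ge (N-1)/\rho$, exactly as you do. The only point worth flagging is that the matrix Riccati comparison with a regular initial condition at an interior radius $t_0=R_o$ (rather than from the pole) should be quoted in the Eschenburg--Heintze form, but that is standard and your use of it, together with the elementary subsolution $\bar h(t)=\alpha t$, is sound.
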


\subsection{Functional setting}\label{sf}
In the sequel we shall consistently employ the functional space $ X_{\mathrm{log}} $, which is defined as the set of all functions $ f \in L^\infty_{\mathrm{loc}}(M) $ such that
\begin{equation*}\label{e40z}
\left|f(x)\right| \leq C \left( \log \rho(x) \right)^{\frac{1}{m-1}} \qquad \text{for a.e. } x \in M \setminus B_2
\end{equation*}
for some $ C>0 $ (depending on $f$). For each $ r \ge 2 $, we endow $ X_{\mathrm{log}} $ with the norm
\begin{equation}\label{e40-norm}
\left\| f \right\|_{\mathrm{log},r} := \sup_{x \in M} \, \frac{\left| f(x) \right|}{\left[ \log \! \left( r^2+ \rho(x)^2 \right) \right]^{\frac{1}{m-1}}} \, .
\end{equation}
Note that, by definition,
\begin{equation}\label{e40-bis}
\left|f(x)\right| \leq \left\| f \right\|_{\mathrm{log},r} \left[ \log\! \left( r^2+ \rho(x)^2 \right) \right]^{\frac{1}{m-1}} \qquad \text{for a.e. } x \in M \, .
\end{equation}
It is immediate to check that $ r \mapsto \| f \|_{\mathrm{log},r} $ is nonincreasing and
\begin{equation}\label{e40-limsup-final}
\lim_{r \to \infty}  \left\| f \right\|_{\mathrm{log},r} = \limsup_{\rho(x) \to \infty} \, \frac{\left| f(x) \right|}{\left( \log \rho(x) \right)^{\frac{1}{m-1}}} \, .
\end{equation}
We point out that, above and below, for the sake of better readability we give up the use of ``essential'' limit (or essential $\limsup$, $ \liminf $, $\sup $ and $ \inf $), even though it would be more appropriate.

\subsection{Statements of the results}\label{sect: exuni}
Before stating our main theorems, let us provide the precise notion of solution (as well as sub- and supersolution) of \eqref{e64} we deal with.

\begin{den}\label{defsol}
Let $T>0$ and $ u_0 \in L^\infty_{\mathrm{loc}}(M) $. We say that $u\in L^\infty_{\mathrm{loc}}(M \times [0, T))$ is a (very weak) solution of the Cauchy problem \eqref{e64} in the time interval $[0,T)$ if there holds
\begin{equation}\label{q50}
- \int_0^T \int_M  u \, \phi_t \,  d\mathcal{V} dt = \int_0^T \int_M  u^m \, \Delta \phi \, d\mathcal{V} dt + \int_M u_0(x) \, \phi(x,0)\, d\mathcal{V}(x)
\end{equation}
for all $\phi\in C^\infty_c(M\times [0, T))$.

Similarly, we say that $u \in L^\infty_{\mathrm{loc}}(M\times [0, T))$ is a (very weak) subsolution [supersolution] of \eqref{e64} if \eqref{q50} holds with ``$=$'' replaced by ``$\leq$'' [``$\geq$''], for all nonnegative $\phi\in C^\infty_c(M\times [0, T))$.
\end{den}

From here on we shall tacitly understand solutions, sub- and supersolutions in the \emph{very weak} sense described above, without further specifications except in cases where ambiguity might occur.

\begin{thm}[Existence]\label{thmexi}
Let assumption \eqref{H} hold and $u_0\in X_{\mathrm{log}}$. Then there exists a solution $u$ of problem \eqref{e64} in the time interval $ [0,T) $ with
\begin{equation}\label{n2}
T= \underline{C} \, \left\| u_0 \right\|^{1-m}_{\mathrm{log},r} ,
\end{equation}
where $\underline{C}$ is a positive constant depending on $C_o, N, m$, independent of $  r \ge 2 $. Moreover, such a solution satisfies the pointwise estimate
\begin{equation}\label{thm-limit-abs}
\left| u(x,t) \right| \le \left( 1 - \frac{t}{T} \right)^{-\frac{1}{m-1}} \left\| u_0 \right\|_{\mathrm{log},r} \left[\log\!\left( r^2+ \rho(x)^2 \right)\right]^{\frac{1}{m-1}} \qquad \text{for a.e. } (x,t) \in M \times (0,T) \, .
\end{equation}
In particular,
\begin{equation}\label{thm-norms}
\left\| u(t) \right\|_{\mathrm{log},r} \le \left( 1 - \frac{t}{T} \right)^{-\frac{1}{m-1}} \left\| u_0 \right\|_{\mathrm{log},r} \qquad \text{for a.e. } t \in (0,T)
\end{equation}
and
\begin{equation}\label{thm-limsup}
\limsup_{\rho(x)\to\infty} \, \frac{\left| u(x,t) \right|}{\left(\log \rho(x)\right)^{\frac 1{m-1}}} \le \left( 1 - \frac{t}{T} \right)^{-\frac{1}{m-1}} \, \limsup_{\rho(x)\to\infty} \, \frac{\left| u_0(x) \right|}{\left(\log \rho(x)\right)^{\frac 1{m-1}}} \qquad \text{for a.e. } t \in (0,T) \, .
\end{equation}
As a consequence, the solution $u$ exists in $ X_{\mathrm{log}} $ at least up to
\begin{equation}\label{thm-tmax}
T = \underline{C} \left[ \limsup_{\rho(x)\to\infty} \, \frac{\left| u_0(x) \right|}{\left(\log \rho(x)\right)^{\frac 1{m-1}}} \right]^{1-m} ,
\end{equation}
so that
\[
\lim_{\rho(x)\to\infty} \, \frac{\left| u_0(x) \right|}{\left(\log \rho(x)\right)^{\frac 1{m-1}}} = 0
\]
ensures global existence.
\end{thm}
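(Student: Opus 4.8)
The plan is to construct the solution by a double approximation: first solve the problem on bounded balls $B_R$ with homogeneous Dirichlet conditions and truncated/regularized data, then let $R\to\infty$. Concretely, I would fix $r\ge 2$, set $M_0:=\|u_0\|_{\mathrm{log},r}$, pick $T=\underline{C}\,M_0^{1-m}$ with $\underline{C}$ to be determined, and for each $R$ consider the nonnegative (then, for sign-changing data, absolute-value) approximating data $u_{0,R}$ obtained by truncating $u_0$ at level $R$ (or convolving/smoothing it), which are bounded, so that the classical theory of the PME on bounded Euclidean-type domains (or on the ball of a manifold) gives a unique bounded weak solution $u_R\in C([0,T);L^1(B_R))\cap L^\infty(B_R\times(0,T))$. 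The first real step is then the \emph{a priori pointwise estimate}: I claim that on $B_R\times(0,T)$ one has
\begin{equation*}
|u_R(x,t)|\le \left(1-\tfrac{t}{T}\right)^{-\frac{1}{m-1}} M_0\left[\log\!\left(r^2+\rho(x)^2\right)\right]^{\frac{1}{m-1}}.
\end{equation*}
To get this I would look for an explicit supersolution of the form $W(x,t)=\eta(t)\,\big[\log(r^2+\rho(x)^2)\big]^{\frac{1}{m-1}}$ with $\eta(t)=(1-t/T)^{-1/(m-1)}M_0$, and check that $W_t\ge \Delta(W^m)$ in $B_R\times(0,T)$ — this is where the curvature hypothesis \eqref{H}-(ii) enters, via the Laplacian-comparison bound $\mathsf m(\rho,\theta)\le C'(1+\rho^2)/\rho$ from Lemma \ref{prop-comp-ricci} together with \eqref{e5}. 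The point is that for $V(x)=\log(r^2+\rho^2)$ one computes $\Delta(V^{m/(m-1)})$ using \eqref{e1}, and the dominant terms are of the form (constant)$\cdot\rho\,\mathsf m(\rho,\theta)\cdot\rho^{-2}$ and (constant)$\cdot$(lower order); the factor $\rho^2$ in the numerator of the estimate on $\mathsf m$ is exactly cancelled by two derivatives of $\log$, so $\Delta(V^{m/(m-1)})$ is bounded by a constant $\times V^{m/(m-1)}\times$(negative-power correction) plus bounded terms — hence can be absorbed by choosing $\underline C$ small enough (this fixes $\underline C$ in terms of $C_o,N,m$ only, independently of $r$ and $R$, as claimed). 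Since $W\ge u_{0,R}\ge 0$ at $t=0$ by \eqref{e40-bis} and $W\ge 0\ge$ (boundary values of $u_R$) on $\partial B_R$, the comparison principle for the Dirichlet PME on $B_R$ yields $u_R\le W$; the lower bound is symmetric.

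The second step is the passage to the limit $R\to\infty$. The uniform bound just established is \emph{locally} in $x$ a genuine $L^\infty$ bound independent of $R$ on $B_{R_1}\times(0,T)$ for each fixed $R_1$; combining it with standard interior regularity estimates for the PME (e.g.\ local boundedness of $u_R$ and equicontinuity / $C^\alpha$ estimates à la DiBenedetto, or simply $L^1_{\mathrm{loc}}$ compactness plus the pointwise bound to control the nonlinearity), one extracts a subsequence $u_{R_k}\to u$ in, say, $C_{\mathrm{loc}}$ or in $L^p_{\mathrm{loc}}$ together with $u_{R_k}^m\to u^m$ in $L^1_{\mathrm{loc}}$. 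Passing to the limit in the very weak formulation \eqref{q50} for $u_R$ (the test functions are compactly supported, so the Dirichlet condition and the truncation of $u_0$ cause no trouble once $R$ exceeds the support) shows that the limit $u$ is a very weak solution of \eqref{e64} on $M\times(0,T)$; the pointwise bound \eqref{thm-limit-abs} is inherited from the uniform estimate, which also needs that $u_{0,R}\to u_0$ a.e., handled by the truncation. This part is essentially the same machinery as in \cite{GMPlarge}; I would cite the relevant compactness/regularity results rather than redo them.

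Finally, the three displayed consequences are formal. Inequality \eqref{thm-norms} is \eqref{thm-limit-abs} divided through by $[\log(r^2+\rho^2)]^{1/(m-1)}$ and taking the supremum over $x$, using definition \eqref{e40-norm}. Inequality \eqref{thm-limsup} follows by dividing \eqref{thm-limit-abs} by $(\log\rho)^{1/(m-1)}$, letting $\rho(x)\to\infty$ (so $\log(r^2+\rho^2)/\log\rho\to 1$), and invoking \eqref{e40-limsup-final} on the right-hand side; equivalently one lets $r\to\infty$ in \eqref{thm-norms}, which is legitimate since $\underline C$ does not depend on $r$, so $T=\underline C\,\|u_0\|_{\mathrm{log},r}^{1-m}$ can be replaced by $\underline C\,\big(\limsup_{\rho\to\infty}|u_0|/(\log\rho)^{1/(m-1)}\big)^{1-m}$, yielding \eqref{thm-tmax} and, when that $\limsup$ vanishes, global existence (the existence time becomes arbitrarily large). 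The main obstacle is unquestionably the first step — finding the right logarithmic barrier $W$ and verifying the differential inequality with all curvature-dependent terms controlled by a constant \emph{independent of $r$} — because the logarithmic growth is borderline for the critical quadratic curvature, so the cancellation must be exact and one must track the lower-order terms carefully (in particular the term coming from $V^{1/(m-1)}|\nabla V|^2$, which is where $m>1$ and the precise exponent $1/(m-1)$ matter). The $R\to\infty$ limit, by contrast, is routine given the uniform bound.
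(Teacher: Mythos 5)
Your proposal is correct and follows essentially the same route as the paper: the separable logarithmic barrier $\left(1-t/T\right)^{-1/(m-1)}\|u_0\|_{\mathrm{log},r}\left[\log\left(r^2+\rho^2\right)\right]^{1/(m-1)}$ is exactly the supersolution $\overline{u}$ of Lemma \ref{supersol}, verified via the upper bound \eqref{eq: m-est-above} on $\mathsf{m}$ (the lower bound \eqref{e5} is not actually needed there, since the coefficient multiplying $\mathsf{m}$ is positive), followed by comparison on balls and the same $R\to\infty$ and $r\to\infty$ limiting arguments. You also correctly identify the one delicate point, namely that the quadratic growth of $\mathsf{m}$ is cancelled exactly by two derivatives of the logarithm so that $\underline{C}$ can be chosen independently of $r\ge 2$.
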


Theorem \ref{thmexi} does not provide the exact \emph{maximal} time for which solutions of \eqref{e64} exist in $ X_{\mathrm{log}} $, but only the \emph{lower estimate} \eqref{thm-tmax}. We can show that, under an additional upper bound on sectional curvatures matching the assumed lower bound \eqref{H}-\textnormal{(ii)} on the Ricci curvature, there exist solutions associated with initial data having the prescribed logarithmic-type growth at infinity which lie in $X_{\mathrm{log}}$ \emph{at most} up to a certain time complying with an \emph{upper estimate} that mimics \eqref{thm-tmax}. 

\begin{thm}[Blow-up in $ X_{\mathrm{log}} $]\label{opt1}
Let assumption \eqref{H} hold. Suppose in addition that there exist $ K_o , R_o > 0 $ such that
\begin{equation}\label{assumption-sectional-intro}
\operatorname{K}_{\omega}(x) \leq - K_{o} \, \mathrm{d}(x,o)^2 \qquad \forall x \in M \setminus B_{R_o} \, .
\end{equation}
Let $ u_0 \in X_{\mathrm{log}} $ be any initial datum satisfying
\begin{equation}\label{tmax-opt}
\liminf_{\rho(x)\to\infty} \, \frac{u_0(x)}{\left(\log \rho(x)\right)^{\frac{1}{m-1}}} > 0 \, .
\end{equation}
Then there exists a solution $u$ of \eqref{e64} belonging to $ L^\infty_{\mathrm{loc}}\left( [0,T); X_{\mathrm{log}} \right) $, where $T$ satisfies
\begin{equation*}\label{eq:time-tau}
\underline{C} \left[ \limsup_{\rho(x)\to\infty} \, \frac{ u_0(x) }{\left(\log \rho(x)\right)^{\frac 1{m-1}}} \right]^{1-m} \le T \le \overline{C} \left[ \liminf_{\rho(x)\to\infty} \, \frac{u_0(x)}{\left( \log \rho(x) \right)^{\frac{1}{m-1}}} \right]^{1-m} 
\end{equation*}
for some $ \underline{C}=\underline{C}(C_o,N,m) >0 $ and $ \overline{C}=\overline{C}(K_o, R_o,N,m)>0 $, which blows up in $ X_{\mathrm{log}} $ at $t=T$:
\begin{equation}\label{eq:final-est-state}
\limsup_{t \uparrow T} \, \limsup_{\rho(x)\to\infty} \, \frac{u(x,t)}{\left(\log \rho(x)\right)^{\frac{1}{m-1}}} = + \infty \, .
\end{equation} 
\end{thm}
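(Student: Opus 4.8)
The lower bound on $T$ is already contained in Theorem \ref{thmexi}, so the whole point is the upper bound together with the blow-up statement \eqref{eq:final-est-state}. The plan is to construct an explicit \emph{subsolution} of \eqref{e64} on a suitable model manifold that diverges, in the $X_{\mathrm{log}}$ sense, as $t$ approaches a finite time $T$ controlled from above by $\liminf_{\rho\to\infty} u_0(x)/(\log\rho)^{1/(m-1)}$, and then transfer this subsolution to $M$ via the Laplacian comparison of Lemma \ref{prop-comp-sect}. Indeed, by \eqref{mm5} the extra upper bound \eqref{assumption-sectional-intro} on sectional curvatures forces $\mathsf{m}(\rho,\theta)\ge C''(1+\rho^2)/\rho$ on $M$, which means that $M$ has \emph{at least} as much volume growth/drift as the critical model $M_\psi$ with $\psi=\exp\{f\}$, $f(\rho)\sim C\rho^2$; since $\Delta(u^m)$ with $u^m$ radial and increasing is monotone in $\mathsf m$, a radial subsolution on such a model will remain a subsolution on $M$.

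First I would fix a barrier ansatz of separated-variables type adapted to the logarithmic scale: something like
\begin{equation*}
\underline{u}(x,t) = \eta(t)\, \left[ \log\!\left( R^2 + \rho(x)^2 \right) \right]^{\frac{1}{m-1}} ,
\end{equation*}
with $\eta$ increasing and blowing up at a finite time, and $R$ large. The key computation is to evaluate $\partial_t \underline{u} - \Delta(\underline{u}^m)$ using the model-Laplacian formula and \eqref{mm5}. Writing $w=\underline{u}^m = \eta^m (\log(R^2+\rho^2))^{m/(m-1)}$, one computes $\partial_\rho w$, $\partial_\rho^2 w$ and the drift term $\mathsf m(\rho,\theta)\partial_\rho w$; the dominant contribution for large $\rho$ comes from $\mathsf m(\rho,\theta)\,\partial_\rho w$, which by \eqref{mm5} is at least of order $\eta^m \rho \cdot \rho^{-1}(\log)^{1/(m-1)} = \eta^m (\log)^{1/(m-1)}$, i.e.\ exactly the same logarithmic power that appears in $\partial_t \underline u = \eta'(\log)^{1/(m-1)}$ — this is the precise sense in which $\gamma=2$ is critical and why a \emph{logarithmic} profile is the right one. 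Choosing $\eta$ to solve an ODE of the form $\eta' \le c\,\eta^m$ (so $\eta(t)=(\eta(0)^{1-m}-c(m-1)t)^{-1/(m-1)}$, blowing up at $T=\eta(0)^{1-m}/(c(m-1))$) makes $\underline u$ a genuine subsolution on $M\setminus B_{R_o}$ for $t<T$, after fixing the boundary/initial data compatibly: one takes $R$ and $\eta(0)$ small enough that $\underline u(\cdot,0)\le u_0$ a.e.\ (possible precisely because of the $\liminf>0$ hypothesis \eqref{tmax-opt}, which guarantees a lower bound $u_0(x)\ge \delta (\log\rho)^{1/(m-1)}$ for large $\rho$), and one needs to handle the bounded region $B_{R_o}$ and the parabolic boundary by a standard cutoff/localization, possibly adding a harmless lower-order term to $\underline u$ there.

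Next I would invoke a comparison principle between the subsolution $\underline u$ and the solution $u$ furnished by Theorem \ref{thmexi}: since $u$ satisfies the growth bound \eqref{thm-limit-abs} and $\underline u$ has strictly logarithmic growth, the two are comparable in a class where a maximum principle applies (this is the same kind of weighted comparison used in \cite{GMPlarge}; one can either quote it or reprove it by the localized Kato-type argument on an exhaustion $B_{R_n}$ with cutoffs, using \eqref{e5} and \eqref{eq: m-est-above} to absorb boundary terms as $R_n\to\infty$). Comparison gives $u(x,t)\ge \underline u(x,t) = \eta(t)(\log(R^2+\rho^2))^{1/(m-1)}$ a.e., hence
\begin{equation*}
\limsup_{\rho(x)\to\infty}\frac{u(x,t)}{(\log\rho(x))^{\frac1{m-1}}} \ge \eta(t) \xrightarrow[t\uparrow T]{} +\infty ,
\end{equation*}
which is exactly \eqref{eq:final-est-state}, and along the way $T\le \overline C\,[\liminf_{\rho\to\infty}u_0/(\log\rho)^{1/(m-1)}]^{1-m}$ because that $\liminf$ controls how small $\eta(0)$ must be, with $\overline C$ depending only on the constant $C''$ from Lemma \ref{prop-comp-sect} (hence on $K_o,R_o,N$) and on $m$. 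The existence of a solution $u\in L^\infty_{\mathrm{loc}}([0,T);X_{\mathrm{log}})$ on the interval $[0,T)$ for this particular $u_0$ is just Theorem \ref{thmexi} again (note $u_0\in X_{\mathrm{log}}$ so its $\limsup$ is finite, giving the lower bound on $T$).

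The main obstacle I expect is twofold. First, making the subsolution inequality hold \emph{uniformly up to the boundary} of the domain and across the transition region $S_{R_o}$: the clean ODE argument works only in $M\setminus B_{R_o}$ and only captures the leading-order term, so the genuinely delicate part is controlling the subleading terms (those involving $\partial_\rho^2 w$ and $(\partial_\rho w)^2$-type corrections, which carry logarithmic powers like $(\log)^{1/(m-1)-1}$ and could have the wrong sign) and choosing the free parameters $R,\eta(0)$ so that the net sign is favorable for all $\rho\ge R_o$ and all $t<T$ simultaneously — this is where the precise form of \eqref{mm5}, with the $+1$ in $1+\rho^2$, is used to dominate the region of moderate $\rho$. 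Second, the comparison principle in the unbounded, weighted, degenerate setting is not entirely standard; rather than re-derive it I would isolate it as a lemma (as in \cite{GMPlarge}) stating that a subsolution with strictly logarithmic growth lies below any solution with the growth \eqref{thm-limit-abs}, proved by multiplying the equation for $(\underline u - u)_+^{?}$ by a cutoff and using \eqref{eq: m-est-above} to send the cutoff to infinity. Everything else is bookkeeping on the explicit profile.
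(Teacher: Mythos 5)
Your barrier construction is essentially the paper's Lemma \ref{lem: exist-subsol}: the same separable profile $\eta(t)\left[\log(r^2+\rho^2)\right]^{1/(m-1)}$ with $\eta(t)=(1-t/T)^{-1/(m-1)}$, validated via \eqref{mm5}. The gaps are in the passage from the subsolution to the conclusion, and there are two of them. First, the global comparison principle you invoke (``a subsolution with logarithmic growth lies below any solution with the growth \eqref{thm-limit-abs}'') is \emph{not available} under the hypotheses of Theorem \ref{opt1}: uniqueness/comparison in $X_{\mathrm{log}}$ (Theorem \ref{thmuniq}, Corollary \ref{comppr}) requires the strengthened Ricci bound \eqref{aa1}, which is \emph{incompatible} with \eqref{assumption-sectional-intro} (the latter forces $\mathrm{Ric}_o\le -(N-1)K_o\,\mathrm{d}(x,o)^2$). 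Your proposed cutoff/Kato argument cannot be rescued here, because \eqref{assumption-sectional-intro} forces $\operatorname{meas}(S_R)\gtrsim e^{cR^2}$, so the boundary terms are not absorbable — this is precisely why the uniqueness proof needs \eqref{aa1} to get \eqref{e52z} with the $\log$ gain. The paper sidesteps comparison on $M$ entirely: it \emph{constructs} the solution as the limit of Cauchy--Dirichlet problems on $B_R$ whose \emph{lateral boundary datum is the subsolution itself}, so that only comparison on bounded domains is needed and the limit automatically dominates $\underline u$.

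Second, even granting $u\ge\underline u$, a single subsolution does not yield \eqref{eq:final-est-state}. Your $\eta$ blows up at $T_{\mathrm{sub}}=\overline C\,[\liminf u_0/(\log\rho)^{1/(m-1)}]^{1-m}$, whereas Theorem \ref{thmexi} only guarantees existence up to $T_{\mathrm{sup}}=\underline C\,[\limsup u_0/(\log\rho)^{1/(m-1)}]^{1-m}$; since $\liminf\le\limsup$ and $1-m<0$, one has $T_{\mathrm{sup}}\le T_{\mathrm{sub}}$ in general, and on $[0,T_{\mathrm{sup}})$ your $\eta(t)$ stays bounded, so no divergence is obtained. The paper's actual proof is an \emph{iterative extension scheme}: at step $n$ it extends the solution by a time $S_{n+1}$ dictated by the supersolution (i.e.\ by the $\limsup$ of $u(\cdot,t_n)$), while the subsolution produces an improved lower bound \eqref{eq:sol-tn1} on the $\liminf$ of $u(\cdot,t_{n+1})$; the recursion \eqref{choice-T-iter} shows $\sum_n S_n\le 2T_1<\infty$ and that $S_n\to 0$ is \emph{equivalent} to the divergence of $\limsup_{\rho\to\infty}u(x,t_n)/(\log\rho)^{1/(m-1)}$, which is what delivers both the upper bound on $T$ and \eqref{eq:final-est-state}. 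Without this continuation-and-bookkeeping loop (or some substitute for it), your argument establishes neither the finiteness of the maximal $X_{\mathrm{log}}$-existence time nor the blow-up of the norm.
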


Finally, as concerns uniqueness of solutions in $ X_{\mathrm{log}} $, we have the following. 
\begin{thm}[Uniqueness]\label{thmuniq}
Let assumptions \eqref{H}-\textnormal{(i)} and \eqref{aa1} hold. Let $u, v$ be any two solutions of problem \eqref{e64} corresponding to the same initial datum $ u_0 \in X_{\mathrm{log}} $, up to the same time $T>0$, in the sense that they both satisfy \eqref{q50}. In addition, suppose that both $u$ and $v$ comply with the bound \eqref{e20z}. Then $ u =  v$ a.e.~in $M \times (0, T)$.
\end{thm}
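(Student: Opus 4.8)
Overall, the plan is to adapt the duality (Oleinik--Aronson--B\'enilan) scheme used to prove \cite[Theorem 2.3]{GMPlarge}, the decisive new ingredient being a sharpened Laplacian-comparison estimate coming from the stronger curvature bound \eqref{aa1}. Arguing as in the proof of Lemma \ref{prop-comp-ricci}, but comparing with a model whose radial Ricci curvature equals $-C_o(1+\rho^2)/[\log(2+\rho)]^2$, assumption \eqref{aa1} yields
\[
\mathsf{m}(\rho,\theta)\ \le\ C'\,\frac{1+\rho^2}{\rho\,\log(2+\rho)}\qquad\forall\,x\equiv(\rho,\theta)\in M\setminus\{o\}
\]
for some $C'=C'(C_o,N)>0$; this extra logarithmic factor with respect to \eqref{eq: m-est-above} is exactly what matches the logarithmic growth permitted by \eqref{e20z}. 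Next, fix $\varepsilon>0$ and set $\tau:=T-\varepsilon$: by \eqref{e20z} both $u$ and $v$ are bounded on $M\times[0,\tau]$ by $C_\varepsilon[\log(2+\rho)]^{1/(m-1)}$, so it suffices to prove $\int_0^\tau\!\int_M(u-v)\,g\,d\mathcal{V}\,dt=0$ for every nonnegative $g\in C^\infty_c(M\times(0,\tau))$ (which forces $u=v$ a.e.\ on $M\times(0,\tau)$), and then let $\varepsilon\downarrow0$. Put $w:=u-v$ and let $a:=(u^m-v^m)/(u-v)$ where $u\ne v$, $a:=0$ elsewhere; then $0\le a\le m\max\{|u|,|v|\}^{m-1}\le\Lambda\log(2+\rho)$ on $M\times[0,\tau]$ for some $\Lambda=\Lambda_\varepsilon>0$, $a$ is measurable and locally bounded, $a\,w=u^m-v^m$, and subtracting the two identities \eqref{q50} (the initial terms cancel, the datum being the same) gives
\[
-\int_0^\tau\!\!\int_M w\,\phi_t\,d\mathcal{V}\,dt\ =\ \int_0^\tau\!\!\int_M a\,w\,\Delta\phi\,d\mathcal{V}\,dt\qquad\forall\,\phi\in C^\infty_c\!\left(M\times[0,\tau)\right).
\]

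Fix such a $g$, say with $\mathrm{supp}\,g\subset B_{R_0}\times(0,\tau)$. On a large ball $B_S$ one regularizes $a$: choose smooth $a_{S,k}$ with $1/k\le a_{S,k}\le\Lambda\log(2+2S)$, $a_{S,k}\to a$ a.e., and $\int_0^\tau\!\int_{B_S}(a_{S,k}-a)^2/a_{S,k}\to0$ as $k\to\infty$ (a standard construction, possible since $a$ is bounded on $B_S\times(0,\tau)$). Let $\phi=\phi_{S,k}$ be the smooth solution of the backward uniformly parabolic problem $\phi_t+a_{S,k}\Delta\phi=-g$ in $B_S\times(0,\tau)$, $\phi=0$ on $S_S\times(0,\tau)$, $\phi(\cdot,\tau)=0$. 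The maximum principle gives $0\le\phi\le(\tau-t)\|g(\cdot,t)\|_\infty\le M_0:=\tau\|g\|_\infty$, uniformly in $S,k$. Testing the dual equation with $\Delta\phi$, integrating by parts (the boundary term vanishes since $\phi\equiv0$ on $S_S$), and replacing $\int_{B_S}g\,\Delta\phi$ by $\int_{B_S}\phi\,\Delta g$ — which avoids any division by $a_{S,k}$ — yields the Oleinik-type estimate
\[
\sup_{t\in[0,\tau]}\int_{B_S}|\nabla\phi|^2\,d\mathcal{V}\ +\ \int_0^\tau\!\!\int_{B_S}a_{S,k}\,(\Delta\phi)^2\,d\mathcal{V}\,dt\ \le\ C_g\,,
\]
with $C_g$ depending only on $g$ and $M_0$, hence uniform in $S$ and $k$.

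Now let $\xi_R$ ($R_0<R$, $\overline{B_{2R}}\subset B_S$) be a radial cutoff, $\xi_R\equiv1$ on $B_R$, $\xi_R\equiv0$ off $B_{2R}$, with $|\nabla\xi_R|\le c/R$ and, thanks to the improved Laplacian bound, $|\Delta\xi_R|=|\xi_R''+\mathsf{m}\,\xi_R'|\le C/\log R$ on $B_{2R}\setminus B_R$. Inserting $\xi_R\phi_{S,k}\in C^\infty_c(M\times[0,\tau))$ into the weak identity for $w$ and using $\phi_t+a_{S,k}\Delta\phi=-g$ with $\xi_R\equiv1$ on $\mathrm{supp}\,g$, one obtains
\[
\int_0^\tau\!\!\int_M w\,g\,d\mathcal{V}\,dt\ =\ \underbrace{-\int_0^\tau\!\!\int_{B_{2R}}\!\xi_R\,w\,(a_{S,k}-a)\,\Delta\phi_{S,k}\,d\mathcal{V}\,dt}_{=:I_1}\ +\ \underbrace{\int_0^\tau\!\!\int_{B_{2R}\setminus B_R}\!(u^m-v^m)\bigl(2\nabla\xi_R\!\cdot\!\nabla\phi_{S,k}+\phi_{S,k}\,\Delta\xi_R\bigr)d\mathcal{V}\,dt}_{=:I_2}.
\]
For fixed $S,R$, Cauchy--Schwarz together with the energy bound and the choice of $a_{S,k}$ give $I_1\to0$ as $k\to\infty$. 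For $I_2$ one combines $|u^m-v^m|\le 2C_\varepsilon^m[\log(2+\rho)]^{m/(m-1)}$, $|\nabla\xi_R|\le c/R$ and $|\Delta\xi_R|\le C/\log R$ with a quantitative decay estimate for $\phi_{S,k}$ away from $\mathrm{supp}\,g$: there $\phi$ solves a homogeneous uniformly parabolic equation with diffusion coefficient $\le\Lambda\log(2+\rho)$ and null terminal datum, so a barrier/comparison argument (Gaussian-type decay with a diffusivity growing only like $\log\rho$, plus interior gradient estimates) makes $\phi_{S,k}$ and $|\nabla\phi_{S,k}|$ extremely small on $B_{2R}\setminus B_R$. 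Here \eqref{aa1} is indispensable: it lowers by a logarithmic factor in the exponent the (still exponential) volume of $B_{2R}\setminus B_R$ — because $\mathsf{m}\le C'(1+\rho^2)/(\rho\log(2+\rho))$ rather than only $\le C'(1+\rho^2)/\rho$ — which is exactly what lets the barrier decay outweigh the volume and absorb the logarithmic factors carried by $u^m-v^m$. Since the barrier decay is Gaussian with a diffusivity proportional to $\Lambda_\varepsilon$, this works for $\tau$ below a threshold $\tau_0$ depending on $\Lambda_\varepsilon,C_o,N$; letting first $k\to\infty$, then $S\to\infty$ and $R\to\infty$ we get $\int_0^\tau\!\int_M w\,g=0$, hence $w=0$ a.e.\ on $M\times(0,\tau_0)$. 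Iterating the argument finitely many times on $[0,T-\varepsilon]$ — at each step restarting from the slice where the two solutions already coincide, $\Lambda_\varepsilon$ being the same throughout — and then letting $\varepsilon\downarrow0$ yields $u=v$ a.e.\ in $M\times(0,T)$.

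The main obstacle is precisely the estimate of $I_2$: one must control, uniformly in the regularization index $k$ and in the radius $S$ of the ball where the dual problem is solved, the decay at spatial infinity of $\phi_{S,k}$ and show that it beats both the exponential volume growth of $M$ along the annulus $B_{2R}\setminus B_R$ and the logarithmically growing factors from $u$ and $v$. This is what forces the strengthened hypothesis \eqref{aa1} (absent in the existence Theorem \ref{thmexi}) and, because only a Gaussian barrier with $\log$-diffusivity is available, the short-time-plus-iteration device; the remaining points — the regularization of $a$, the energy estimate, and the limit passage in $I_1$ — are essentially those of \cite{GMPlarge}.
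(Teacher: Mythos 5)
Your proposal follows the same duality scheme as the paper and correctly isolates the two genuinely new ingredients: (i) the strengthened hypothesis \eqref{aa1} buys a logarithmic gain in the Laplacian/volume comparison (your bound $\mathsf{m}\le C'(1+\rho^2)/(\rho\log(2+\rho))$ is the pointwise form of the paper's surface-measure estimate $\operatorname{meas}(S_R)\le\exp\{C_M R^2/\log R\}$), and (ii) since the coefficient $a$ only satisfies $0\le a\le C\log(2+\rho)$, the dual solution can only be controlled by a barrier of the form $\exp\{-K\rho^2/((2T-t)\log\rho)\}$ rather than a Gaussian, whence the competition $\exp\{-KR^2/(2T\log R)\}$ versus $\exp\{C_M R^2/\log R\}$ and the short-time restriction $T<K/(2C_M)$ removed by finite iteration. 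This is exactly the paper's mechanism (its Claim \eqref{n21}--\eqref{n26} and \eqref{e52z}--\eqref{n52}). The implementations differ: you solve the dual problem with an interior source $-g$ and zero terminal datum on a large ball $B_S$ and localize with a cutoff $\xi_R$, producing the annulus term $I_2$; the paper solves the homogeneous backward problem on $B_R$ itself with terminal datum $\omega$ and homogeneous Dirichlet data, so the only far-field contribution is the boundary integral $J_n(R)$ involving $\partial\xi_n/\partial\nu$ on $S_R$.

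This difference is where your argument has a real soft spot. Your $I_2$ contains $\int_{B_{2R}\setminus B_R}(u^m-v^m)\,\nabla\xi_R\cdot\nabla\phi_{S,k}$, so you need \emph{pointwise} smallness of $|\nabla\phi_{S,k}|$ on the whole annulus, uniformly in $k$ and $S$; the Oleinik energy bound $\sup_t\int|\nabla\phi|^2\le C_g$ is useless here because it gets multiplied by $\operatorname{meas}(B_{2R}\setminus B_R)^{1/2}\sim\exp\{CR^2/\log R\}$. You invoke ``interior gradient estimates'', but the regularized coefficients satisfy only $a_{S,k}\ge 1/k$ (and $a$ itself vanishes wherever $u=v$), so the equation degenerates as $k\to\infty$ and uniform interior gradient estimates are not available off the shelf; this step would need a genuine argument. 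The paper sidesteps the issue entirely: since $\xi_n\equiv 0$ on $S_R$, the normal derivative there is bounded by a one-sided comparison in the thin annulus $B_R\setminus B_{R-1}$ with an explicit radial (Euclidean-comparison) barrier taking the value $\eta(R-1,0)$ on $S_{R-1}$ and $0$ on $S_R$ — a Hopf-type argument requiring no interior regularity uniform in $k$. If you keep the cutoff formulation, the cleanest fix is to reshape $I_2$ so that only $\phi$ itself (not $\nabla\phi$) appears near infinity, or to switch to the paper's boundary-term formulation. The remaining components of your proof — the bound \eqref{n5} on $a$, the regularization of $a$, the treatment of $I_1$, the smallness-of-$T$ threshold and the iteration — match the paper's.
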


\begin{oss}\label{rem: quad}\rm
As mentioned in the Introduction, in \cite{GMPlarge} the authors established that, under assumption \eqref{H}, there holds uniqueness of \emph{bounded} solutions of \eqref{e64}. Here we prove that, upon requiring a bit more (namely \eqref{aa1}), one can enlarge the class of uniqueness to suitable \emph{unbounded} solutions. If \eqref{H}-\textnormal{(ii)} is satisfied but \eqref{aa1} fails, we do not know whether uniqueness still holds. On the other hand, if assumption \eqref{H}-\textnormal{(ii)} is not met, in general one can construct multiple solutions in $L^\infty(M\times (0,T))$ associated with the same $u_0\in L^\infty(M)$. For a related discussion, see also \cite[Remark 2.4]{GMPlarge}.
\end{oss}

\section{Existence: proof of Theorem \ref{thmexi}}\label{existence}

First of all, let us briefly describe our strategy for an initial datum $ u_0 \in X_{\mathrm{log}}$ with $u_0 \geq 0$, which is fairly standard. For every $R>0$, one considers the approximate problems
\begin{equation}\label{q10}
\begin{cases}
u_t = \Delta \! \left(u^m\right) & \text{in } B_R\times (0,+\infty) \,, \\
u = 0 & \text{on } \partial B_R \times (0,+\infty) \,, \\
u = u_0  & \text{on } B_R\times \{0\} \,.
\end{cases}
\end{equation}
Existence and uniqueness of a weak solution $u_R$ of \eqref{q10} is ensured by well-established results (see e.g.~\cite[Section 5]{Vaz07} in the Euclidean setting), and the corresponding comparison principle guarantees that the map $ R \mapsto u_R $ is nondecreasing. As a consequence, the pointwise limit $u:=\lim_{R\to \infty} u_R$, which is a candidate solution of \eqref{e64}, always exists: the nontrivial issue is to prove that it is finite. To this end, following similar arguments to \cite[Section 3]{GMPlarge}, we shall provide a \emph{separable supersolution} of \eqref{e64} that blows up at some positive time. However, in contrast with the results of \cite{GMPlarge}, the latter turns out to have a logarithmic-type spatial growth which matches the admissible growth of $u_0$. When sign-changing initial data are considered, such strategy can easily be adapted (we refer again to \cite[Section 3]{GMPlarge}). 

\begin{lem}\label{supersol}
Let assumption \eqref{H} hold. Given any $T,a>0$ and $ r \ge 2 $, let
\begin{equation}\label{t1c}
\overline{u}(x,t):=\left( 1 - \frac{t}{T} \right)^{-\frac{1}{m-1}} W_{T,r}(x) \qquad \forall (x,t) \in M \times [0,T) \, ,
\end{equation}
where
\begin{equation}\label{eq: expl-subsol}
W_{T,r}(x) := \frac{a \left[ \log\! \left( r^2+ \rho(x)^2 \right) \right]^{\frac{1}{m-1}}}{T^{\frac{1}{m-1}}} \qquad \forall x \in M \, .
\end{equation}
Then there exists $ a=a(C_o,N,m)>0 $ such that
\begin{equation}\label{t1a}
\overline{u}_t \geq \Delta\!\left( \overline{u}^m \right) \qquad \text{in } M \times (0, T) \, .
\end{equation}
\end{lem}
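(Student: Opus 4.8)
The plan is to verify the differential inequality \eqref{t1a} by a direct computation, exploiting the separable structure of $\overline u$ and the explicit form of $W_{T,r}$. First I would compute both sides: since $\overline u(x,t) = \tau(t)\,W_{T,r}(x)$ with $\tau(t) := (1-t/T)^{-1/(m-1)}$, one has $\overline u_t = \tau' W_{T,r}$ and $\Delta(\overline u^m) = \tau^m \Delta(W_{T,r}^m)$, and an elementary calculation gives $\tau' = \tfrac{1}{(m-1)T}\tau^m$. Hence \eqref{t1a} is equivalent to the \emph{time-independent} inequality
\begin{equation*}
\frac{W_{T,r}}{(m-1)T} \ge \Delta\!\left( W_{T,r}^m \right) \qquad \text{in } M \, .
\end{equation*}
Plugging in $W_{T,r} = a\, T^{-1/(m-1)} \left[\log(r^2+\rho^2)\right]^{1/(m-1)}$, the factor $T^{-m/(m-1)}$ cancels throughout and one is left with requiring
\begin{equation*}
\frac{a}{m-1}\left[\log(r^2+\rho^2)\right]^{\frac{1}{m-1}} \ge a^m\, \Delta\!\left( \left[\log(r^2+\rho^2)\right]^{\frac{m}{m-1}} \right) \qquad \text{in } M \, .
\end{equation*}

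Next I would estimate the right-hand side using \eqref{e1}. Writing $g(\rho) := \left[\log(r^2+\rho^2)\right]^{m/(m-1)}$, which is a radial function, we have $\Delta g = g''(\rho) + \mathsf m(\rho,\theta)\, g'(\rho)$. A routine differentiation shows $g'(\rho) = \tfrac{2m}{m-1}\rho\,(r^2+\rho^2)^{-1}\left[\log(r^2+\rho^2)\right]^{1/(m-1)}$, which is nonnegative, and $g''(\rho)$ is of lower order: roughly $g''(\rho) = O\!\big((r^2+\rho^2)^{-1}[\log(r^2+\rho^2)]^{1/(m-1)}\big)$ plus a term of order $(r^2+\rho^2)^{-1}[\log(r^2+\rho^2)]^{(2-m)/(m-1)}$, all of which are bounded (uniformly in $r\ge 2$) by a constant times $(r^2+\rho^2)^{-1}[\log(r^2+\rho^2)]^{1/(m-1)}$ since $\log(r^2+\rho^2)\ge \log 4 >1$. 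Because $g'\ge 0$, to bound $\mathsf m(\rho,\theta)\,g'(\rho)$ from above I would invoke Lemma \ref{prop-comp-ricci}: $\mathsf m(\rho,\theta) \le C'\,(1+\rho^2)/\rho$, so that
\begin{equation*}
\mathsf m(\rho,\theta)\, g'(\rho) \le \frac{2m\,C'}{m-1}\,\frac{1+\rho^2}{r^2+\rho^2}\left[\log(r^2+\rho^2)\right]^{\frac{1}{m-1}} \le \frac{2m\,C'}{m-1}\left[\log(r^2+\rho^2)\right]^{\frac{1}{m-1}} \, ,
\end{equation*}
using $1+\rho^2 \le r^2+\rho^2$. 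Collecting terms yields $\Delta g \le C^\ast \left[\log(r^2+\rho^2)\right]^{1/(m-1)}$ on $M$, for a constant $C^\ast$ depending only on $C_o,N,m$ and \emph{independent of} $r\ge 2$.

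Finally, I would choose $a$. The desired inequality reduces to $\tfrac{a}{m-1}\ge a^m\,C^\ast$, i.e.\ $a^{m-1}\le \tfrac{1}{(m-1)C^\ast}$; taking $a := \left[(m-1)C^\ast\right]^{-1/(m-1)}$ works, and this depends only on $C_o,N,m$ as claimed. One small point I should be careful about is the behavior near the pole $\rho=0$: there $\mathsf m(\rho,\theta)$ blows up like $(N-1)/\rho$ by \eqref{e5}, but $g'(\rho)$ vanishes linearly in $\rho$, so the product $\mathsf m\, g'$ stays bounded; moreover the upper bound \eqref{eq: m-est-above} already incorporates the correct $1/\rho$ singularity, so no separate argument is needed — the estimate above is valid on all of $M\setminus\{o\}$, hence a.e.\ on $M$. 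The computation of $\Delta(W_{T,r}^m)$ — properly speaking $W_{T,r}^m$ is smooth away from $o$ and $C^1$ across $o$, so $\Delta(\overline u^m)$ should be understood in the distributional sense, but since the classical inequality holds pointwise on $M\setminus\{o\}$ with $W_{T,r}^m \in C^1(M)$ and no negative singular part appears at $o$, it passes to the distributional level. I expect the main obstacle, such as it is, to be keeping the dependence of all constants on $r$ under control (so that $a$ and $\underline C$ are genuinely $r$-independent), which is exactly what the bounds $1+\rho^2\le r^2+\rho^2$ and $\log(r^2+\rho^2)\ge\log 4$ are designed to handle.
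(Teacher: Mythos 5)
Your proposal is correct and follows essentially the same route as the paper: separate variables to reduce \eqref{t1a} to the elliptic inequality $W_{T,r}\ge (m-1)T\,\Delta(W_{T,r}^m)$, compute the radial derivatives explicitly, bound $\mathsf m(\rho,\theta)$ from above via Lemma \ref{prop-comp-ricci}, and choose $a$ small depending only on $C_o,N,m$. The bookkeeping that makes the constants independent of $r\ge2$ (namely $1+\rho^2\le r^2+\rho^2$ and $\log(r^2+\rho^2)\ge\log 4>1$) matches the paper's computation, so no further changes are needed.
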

\begin{proof}
First of all, we set
\begin{equation}\label{mm18}
W(x)\equiv W(\rho(x)):= a \left[ \log\! \left( r^2+ \rho(x)^2 \right) \right]^{\frac{1}{m-1}} \qquad \forall x\equiv (\rho, \theta)\in M \, .
\end{equation}
The computation of the radial derivatives of $ W^m $ yields
\begin{equation}\label{mm6}
\left(W^m\right)^\prime\!(\rho)=a^m \, \frac{2 m}{m-1} \, \frac{\rho}{r^2 + \rho^2} \left[ \log\! \left( r^2+ \rho^2 \right) \right]^{\frac{1}{m-1}}
\end{equation}
and
\begin{equation}\label{mm7}
\begin{aligned}
\left(W^m\right)''\!(\rho) = & \, a^m \, \frac{2 m}{m-1} \, \frac{\left[ \log\! \left( r^2+ \rho^2 \right) \right]^{\frac{1}{m-1}}}{r^2 + \rho^2} - a^m \, \frac{4 m}{m-1} \, \frac{\rho^2}{\left(r^2+\rho^2\right)^2} \left[ \log\! \left( r^2+ \rho^2 \right) \right]^{\frac{1}{m-1}} \\
& + a^m \, \frac{4 m}{(m-1)^2} \, \frac{\rho^2}{\left(r^2+\rho^2\right)^2} \left[ \log\! \left( r^2+ \rho^2 \right) \right]^{\frac{2-m}{m-1}} \\
= & \, a^m \, \frac{2 m}{m-1} \, \frac{\left[ \log\! \left( r^2+ \rho^2 \right) \right]^{\frac{1}{m-1}}}{r^2 + \rho^2} \left[ 1  - \frac{2\rho^2}{r^2+\rho^2} + \frac{2\rho^2}{(m-1)\left(r^2+\rho^2\right)\log\! \left( r^2+ \rho^2 \right) } \right] .
\end{aligned}
\end{equation}
Let us check that it is possible to pick $a>0$, depending only on $ m $ and the constant $ C^\prime $ of Lemma \ref{prop-comp-ricci}, in such a way that
\begin{equation}\label{t1}
W \geq (m-1) \, \Delta\!\left(W^m\right) \qquad \text{in } M \, .
\end{equation}
Thanks to \eqref{e1}, \eqref{mm6}--\eqref{mm7} and the radiality of $W$, we can infer that
\begin{equation}\label{t1b}
(m-1)\,\Delta\!\left(W^m\right) = 2m \, a^m \, \frac{\left[ \log\! \left( r^2+ \rho^2 \right) \right]^{\frac{1}{m-1}}}{r^2 + \rho^2} \left[\rho \, \mathsf{m}(\rho, \theta) +1  - \frac{2\rho^2}{r^2+\rho^2} \left( 1 - \frac{1}{(m-1)\log\! \left( r^2+ \rho^2 \right)} \right) \right] .
\end{equation}
In particular, by virtue of \eqref{eq: m-est-above}, identity \eqref{t1b} yields (recall that $ r \ge 2 $)
\begin{equation*}\label{t2}
(m-1) \, \Delta\!\left(W^m\right) \leq 2m \, a^m \, \frac{\left[ \log\! \left( r^2+ \rho^2 \right) \right]^{\frac{1}{m-1}}}{r^2 + \rho^2} \left[ C^\prime \left( 1 + \rho^2 \right) + \frac{m+1}{m-1} \right] \qquad \text{in } M \, ,
\end{equation*}
whence inequality \eqref{t1} is fulfilled provided
\begin{equation}\label{t4a}
2m \, a^{m-1} \left[ C^\prime \left( 1 + \rho^2 \right) + \frac{m+1}{m-1} \right] \leq r^2+\rho^2 \qquad \forall  \rho \ge 0 \, .
\end{equation}
An elementary calculation shows that \eqref{t4a} holds e.g.~upon the choice
$$ a = \left[ 2m \left( C^\prime + \frac{m+1}{m-1} \right) \right]^{-\frac{1}{m-1}} . $$
This ensures the validity of \eqref{t1} and therefore the fact that $W_{T,r} $ satisfies
\begin{equation*}\label{t5}
W_{T,r} \geq (m-1)T \, \Delta\!\left( W_{T,r}^m \right) \qquad \text{in } M \, ,
\end{equation*}
which is in turn equivalent to \eqref{t1a}.
\end{proof}

\begin{proof}[Proof of Theorem \ref{thmexi}] 
For every $R>0$, let $u_R$ be the solution of problem \eqref{q10}, which in particular complies with the very weak formulation
\begin{equation}\label{t10}
- \int_0^\infty \int_{B_R} u_R \, \phi_t \,  d\mathcal{V} dt = \int_0^\infty \int_{B_R} u_R^m  \, \Delta \phi \, d\mathcal{V} dt + \int_{B_R} u_0(x) \, \phi(x,0) \, d\mathcal{V}(x)
\end{equation}
for all $\phi\in C^\infty_c(B_R\times [0, +\infty))$. Let $ \overline{u} $ be the supersolution provided by Lemma \ref{supersol}, with the choice (we can suppose with no loss of generality that $ u_0 \not\equiv 0 $)
\begin{equation*}\label{eq-sopra-T}
T= a^{m-1} \left\| u_0 \right\|_{\mathrm{log},r}^{1-m} .
\end{equation*}
Because $ u_0 \in X_{\mathrm{log}} $, as a straightforward consequence of \eqref{e40-bis} and \eqref{eq: expl-subsol} we have that
\begin{equation}\label{eq-sopra-dato}
\left|u_0(x)\right| \le W_{T,r}(x) \qquad \text{for a.e. } x \in M \, .
\end{equation}
In view of \eqref{t1a} and \eqref{eq-sopra-dato}, we can apply the comparison principle on balls to get
\begin{equation}\label{t10-comp}
u_R \le \overline{u} = \left( 1 - \frac{t}{T} \right)^{-\frac{1}{m-1}} \left\| u_0 \right\|_{\mathrm{log},r} \left[ \log\! \left( r^2+ \rho(x)^2 \right) \right]^{\frac{1}{m-1}} \qquad \text{in } B_R \times (0,T) \, .
\end{equation}
Similarly, by noting that $ -\overline{u}^m = \left( - \overline{u} \right)^m $ satisfies
\begin{equation*}\label{t1a-minus}
\left( -\overline{u} \right)_t \leq \Delta \! \left( -\overline{u} \right)^m \qquad \text{in } M\times (0, T) \, ,
\end{equation*}
we also deduce the estimate from below
\begin{equation}\label{t10-comp-minus}
- \overline{u} = - \left( 1 - \frac{t}{T} \right)^{-\frac{1}{m-1}} \left\| u_0 \right\|_{\mathrm{log},r} \left[ \log\! \left( r^2+ \rho(x)^2 \right) \right]^{\frac{1}{m-1}} \le u_R  \qquad \text{in } B_R \times (0,T) \, .
\end{equation}
From \eqref{t10-comp} and \eqref{t10-comp-minus}, there follows
\begin{equation}\label{t10-abs}
\left| u_R(x,t) \right| \le \left( 1 - \frac{t}{T} \right)^{-\frac{1}{m-1}} \left\| u_0 \right\|_{\mathrm{log},r} \left[ \log\! \left( r^2+ \rho(x)^2 \right) \right]^{\frac{1}{m-1}} \qquad \text{for a.e. } (x,t) \in B_R \times (0,T) \, .
\end{equation}
Thanks to estimate \eqref{t10-abs}, we infer that $ \{ u_R \} $ is bounded in $ L^\infty_{\mathrm{loc}}(M\times[0,T)) $, which is enough to pass to the limit in \eqref{t10} and obtain a solution $ u $ in the sense of Definition \ref{defsol} satisfying \eqref{thm-limit-abs} (see \cite[Proof of Theorem 2.2]{GMPlarge} for some more detail). As mentioned above, when $ u_0 \ge 0 $ such passage to the limit is straightforward by monotonicity. 

In order to complete the proof, we just remark that \eqref{thm-norms} is a consequence of \eqref{thm-limit-abs} and definition \eqref{e40-norm}, while \eqref{thm-limsup} and \eqref{thm-tmax} are consequences of \eqref{thm-norms} and \eqref{n2}, respectively, upon letting $ r \to \infty $ and recalling \eqref{e40-limsup-final}. 
\end{proof}

\section{Blow-up in $ X_{\mathrm{log}} $: proof of Theorem \ref{opt1}} \label{blup}

We start by a crucial result, which is the analogue of Lemma \ref{supersol}. 

\begin{lem}\label{lem: exist-subsol}
Let assumption \eqref{H}-\textnormal{(i)} hold. Suppose in addition that \eqref{assumption-sectional-intro} is fulfilled
for some $ K_{o} , R_o > 0 $. Let $T>0$. Then there exist positive constants $ a,r $, depending only on $K_o,R_o,N,m$, such that the function $ W_{T,r} $ defined in \eqref{eq: expl-subsol} satisfies 
\begin{equation}\label{subsol-ellliptic}
W_{T,r} \le (m-1) T \, \Delta\!\left( W_{T,r}^m \right) \qquad \text{in } M \, .
\end{equation}
\end{lem}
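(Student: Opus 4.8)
The plan is to follow the proof of Lemma \ref{supersol} up to the identity \eqref{t1b} and then \emph{reverse} the final inequality there: the extra hypothesis \eqref{assumption-sectional-intro} lets us replace the \emph{upper} bound on $\mathsf m(\rho,\theta)$ exploited in Lemma \ref{supersol} by the \emph{lower} bound furnished by Lemma \ref{prop-comp-sect}. Since \eqref{t1b} was obtained from \eqref{e1}, the radial derivatives \eqref{mm6}--\eqref{mm7} and the radiality of $W$ alone, it holds on any Cartan--Hadamard manifold, in particular under \eqref{H}-(i). The function $W$ in \eqref{mm18} is radial and smooth on all of $M$ (note $\log(r^2+\rho^2)\ge\log 4>0$ for $r\ge 2$), so comparing it with \eqref{t1b} and cancelling the common positive factor shows --- after the scaling $W_{T,r}=T^{-1/(m-1)}W$ and the linearity of $\Delta$ --- that \eqref{subsol-ellliptic} is \emph{equivalent} to $r^2+\rho^2\le 2m\,a^{m-1}B(\rho,\theta)$ for all $\rho\ge 0$ and $\theta\in\mathbb S^{N-1}$, where
\[
B(\rho,\theta):=\rho\,\mathsf m(\rho,\theta)+1-\frac{2\rho^2}{r^2+\rho^2}\left(1-\frac{1}{(m-1)\log(r^2+\rho^2)}\right).
\]

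Next I would estimate $B$ from below, uniformly in $\theta$, by two complementary bounds. Dropping the nonnegative term $\frac{2\rho^2}{(m-1)(r^2+\rho^2)\log(r^2+\rho^2)}$ and using $\frac{2\rho^2}{r^2+\rho^2}<2$ gives $B(\rho,\theta)\ge\rho\,\mathsf m(\rho,\theta)-1$. The Euclidean comparison \eqref{e5} yields $\rho\,\mathsf m(\rho,\theta)\ge N-1$, hence
\[
B(\rho,\theta)\ge N-\frac{2\rho^2}{r^2+\rho^2}=\frac{Nr^2+(N-2)\rho^2}{r^2+\rho^2}\ge\frac{Nr^2}{r^2+\rho^2}>0 ,
\]
which is effective for small $\rho$. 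On the other hand --- and this is the only place where the curvature upper bound \eqref{assumption-sectional-intro} is used --- Lemma \ref{prop-comp-sect} gives $\rho\,\mathsf m(\rho,\theta)\ge C''(1+\rho^2)$, hence $B(\rho,\theta)\ge C''(1+\rho^2)-1\ge\frac{C''}{2}\rho^2$ for every $\rho\ge\rho_0:=\sqrt{2/C''}$, which is effective for large $\rho$. Splitting at $\rho_0$ and combining,
\[
\sup_{\rho\ge 0,\ \theta\in\mathbb S^{N-1}}\frac{r^2+\rho^2}{B(\rho,\theta)}\le\Lambda:=\max\left\{\frac{(r^2+\rho_0^2)^2}{Nr^2},\ \frac{2}{C''}\left(1+\frac{r^2}{\rho_0^2}\right)\right\}<\infty .
\]
Fixing $r=2$ (any $r\ge 2$ would do), $\Lambda$ depends only on $K_o,R_o,N,m$ through $C''$ and $\rho_0$; choosing $a:=(\Lambda/2m)^{1/(m-1)}$ then yields $2m\,a^{m-1}B(\rho,\theta)\ge\Lambda\,B(\rho,\theta)\ge r^2+\rho^2$, i.e.\ the desired equivalent inequality, and hence \eqref{subsol-ellliptic}.

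As for the difficulty, essentially all the content is packed into Lemma \ref{prop-comp-sect}, which I treat as a black box; once the quadratic lower Laplacian estimate $\mathsf m(\rho,\theta)\ge C''(1+\rho^2)/\rho$ is granted, the argument above is merely a routine patching of the regimes of small and large $\rho$. The one genuine subtlety is that this quadratic bound degenerates near the pole, so on the fixed ball $\{\rho\le\rho_0\}$ one has to fall back on the trivial bound $\rho\,\mathsf m\ge N-1$; this causes no harm because $B$ is in fact strictly positive everywhere (already $\rho\,\mathsf m(\rho,\theta)\ge N-1\ge 1$ together with $\frac{2\rho^2}{r^2+\rho^2}<2$ forces $B>0$), so the supremum above is finite with the asserted dependence, and the value $\rho=0$ is recovered by continuity since $W^m$ is smooth on $M$.
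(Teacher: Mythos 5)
Your proof is correct and is essentially the paper's own argument: both reduce \eqref{subsol-ellliptic}, via the identity \eqref{t1b} and the lower Laplacian bound \eqref{mm5} of Lemma \ref{prop-comp-sect}, to a pointwise inequality in $\rho$ uniform in $\theta$, and then close it by taking $a$ large. The only (harmless) deviation is in the bookkeeping: you fix $r=2$ and control the region of small $\rho$ with the Euclidean comparison \eqref{e5}, whereas the paper instead enlarges $r$ (depending on $C^{\prime\prime}$) so that the bracket is bounded below by $\tfrac{C^{\prime\prime}}{2}\left(1+\rho^2\right)$ for all $\rho \ge 0$.
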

\begin{proof}
Let $W(x)\equiv W(\rho(x))$ be as in \eqref{mm18}, with $r\geq 2$. Here we need to require that $W$ satisfies
\begin{equation}\label{aa10} 
W \leq (m-1) \, \Delta\!\left(W^m\right)  \qquad \text{in } M \, .
\end{equation}
Thanks to \eqref{t1b}, \eqref{e1} and estimate \eqref{mm5} given by Lemma \ref{prop-comp-sect}, inequality \eqref{aa10} holds if
\begin{equation}\label{aa11}
\begin{aligned}
\frac{2m \, a^{m-1}}{r^2 + \rho^2} \left[C^{\prime\prime} \left(1+\rho^2 \right) +1  - \frac{2\rho^2}{r^2+\rho^2} \left( 1 - \frac{1}{(m-1)\log\! \left( r^2+ \rho^2 \right)} \right) \right] \ge 1 \qquad \forall \rho \ge 0 \, .
\end{aligned}
\end{equation}
Now we select $ r=r(C^{\prime\prime}) $ and $ a=a(r,C^{\prime\prime},m) $ so large that 
\begin{equation}\label{eq:L1}
C^{\prime\prime} \left(1+\rho^2 \right) + 1 - \frac{2\rho^2}{r^2+\rho^2} \ge \frac{C^{\prime\prime}}{2} \left(1+\rho^2 \right) \quad \forall \rho \ge 0 \qquad \text{and} \qquad a \geq \left( \frac{r^2}{C^{\prime\prime}m} \right)^{\frac 1{m-1}} .
\end{equation}
Upon the choices \eqref{eq:L1}, it is readily seen that \eqref{aa11} is fulfilled and therefore $ W $ complies with \eqref{aa10}, which is equivalent to the fact that $ W_{T,r} $ complies with \eqref{subsol-ellliptic}.
\end{proof}

\begin{proof}[Proof of Theorem \ref{opt1}]
We suppose throughout that the solutions we consider are continuous in $t$~w.r.t~the $ L^\infty_{\mathrm{loc}}(M) $ weak$^\ast$ topology: this is nonessential, but simplifies a bit the discussion.

\smallskip
Let $  0<S_n<T_n$ be suitable time sequences that will be recursively defined.~For any $ \delta>0 $, let
\begin{equation*}\label{choice-pre-pre} 
V_{n} := \left(W_{T_n,\hat{r}}^m - \delta\right)^{\frac 1 m} ,
\end{equation*}
where $ W_{T_n,\hat{r}} $ is given in \eqref{eq: expl-subsol} with $ T \equiv T_n $ and the same choices of $a$ and $r$ as in Lemma \ref{lem: exist-subsol} (depending only on $K_o,R_o,N,m$), which we label $ \hat{a} $ and $ \hat{r} $, respectively. Thanks to \eqref{subsol-ellliptic}, we have
\begin{equation}\label{choice-pre}
V_{n} \le (m-1) {T}_n \, \Delta\!\left(V_{n}^m\right) \qquad \text{in } M \, ,
\end{equation}
independently of $ \delta > 0 $. Now we set up the following iterative scheme. Let $ T_0=S_0=0 $, $ t_n:=\sum_{k=0}^n S_k $ and suppose that $ u $ is a solution of \eqref{e64} that lies in $ L^\infty\left( [0,t_n]; X_{\mathrm{log}} \right) $. We define $ T_{n+1} $ by
\begin{equation}\label{choice-T}
{T}_{n+1} = \left( \frac{\hat{a}}{1-\epsilon_{n}} \right)^{m-1} \left[ \liminf_{\rho(x)\to\infty} \, \frac{u(x,t_n)}{\left( \log \rho(x) \right)^{\frac{1}{m-1}}} \right]^{1-m} , 
\end{equation}
where $ \epsilon_0=\frac 12 $ and for $ n \ge 1 $ we select $ \epsilon_{n} \in (0,1) $ so small that
\begin{equation}\label{ch-eps}
\left[ \frac{1}{\left( 1-\epsilon_{n} \right)^{m-1}} -1 \right] \left( T_n - S_n \right) \le \frac{T_1}{2^n} \, .
\end{equation}
Note that $ T_{n+1} $ is well defined provided the $\liminf$ appearing in \eqref{choice-T} is nonzero: this is true at $n=0$ by \eqref{tmax-opt}, and will be established for all $ n $ by induction. Note that the latter is finite because $ u \in L^\infty\left( [0,t_n]; X_{\mathrm{log}} \right) $. Moreover, thanks to \eqref{choice-T} and the very definition of $W_{T_n,\hat{r}}$, there holds 
\begin{equation}\label{choice-T-2}
\lim_{\rho(x)\to\infty} \, \frac{V_{n}(x)}{\left(\log \rho(x) \right)^{\frac{1}{m-1}}} = \left(1-\epsilon_n\right)  \liminf_{\rho(x)\to\infty} \, \frac{u(x,t_n)}{\left(\log \rho(x)\right)^{\frac{1}{m-1}}} > 0 \, .
\end{equation}
Since $ u(\cdot,t_n) $ is locally bounded and \eqref{choice-T-2} holds, for $ \delta $ large enough (depending on $ n $) we can make sure that $ u(x,t_n) \ge V_n(x) $ for a.e.~$ x\in M$. Let us set 
\begin{equation*}\label{eq:subsol}
\underline{u}(x,t) := \left( 1 - \frac{t}{T_{n+1}} \right)^{-\frac{1}{m-1}} V_{n}(x) \qquad \forall (x,t) \in M \times \left[0,T_{n+1} \right)
\end{equation*} 
and observe that $ \underline{u} $ is, by construction (recall \eqref{choice-pre}), a subsolution of problem \eqref{e64} with $ u_0 \equiv u(\cdot,t_n) $ and $ T \equiv T_{n+1} $. Similarly, from Lemma \ref{supersol} we know that there exists another choice of $a$ (depending only on $ C_o,N,m $), which we label $ \tilde{a} $, such that $ \overline{u} $ defined as in \eqref{t1c} satisfies \eqref{t1a}, for all $ r \equiv \tilde{r} \ge 2 $. In addition, if we pick the  corresponding existence time $T$ as 
\begin{equation*}\label{choice-T2}
T \equiv S_{n+1}^{\tilde{r}}:= \tilde{a}^{m-1} \left[ \left\| u(\cdot,t_n) \right\|_{\mathrm{log},\tilde{r}} \right]^{1-m} \le T_{n+1}
\end{equation*}
we deduce that $ \overline{u} $ is a supersolution of problem \eqref{e64} (still with $ u_0 \equiv u(\cdot,t_n) $) which is larger than $ \underline{u} $ in the whole of $ M \times \left[0,S_{n+1}^{\tilde{r}}\right) $. Consider now the solutions $ {u}_R $ to the following nonhomogeneous Cauchy-Dirichlet problems:
\begin{equation*}\label{q10-mod} 
\begin{cases}
\left( {u}_R \right)_t = \Delta \! \left( {u}^m_R \right) & \text{in } B_R\times \left( 0,T_{n+1} \right) , \\
{u}_R = \underline{u}  & \text{on } \partial B_R \times \left( 0 , T_{n+1} \right) , \\
{u}_R = u(\cdot,t_n)  & \text{on } B_R\times \{ 0 \} \, ,
\end{cases}
\end{equation*}
for all $R>0$. By standard comparison in $B_R$, it is plain that 
\begin{equation}\label{eq:sub-sol}
{u}_R(x,t) \ge \underline{u}(x,t) = \left( 1 - \frac{t}{T_{n+1}} \right)^{-\frac{1}{m-1}} V_{n}(x)  \qquad \text{for a.e. } (x,t) \in B_R \times \left[ 0,T_{n+1}  \right) 
\end{equation}
and 
\begin{equation}\label{eq:sup-sol}
{u}_R(x,t) \le \overline{u}(x,t) = \left( 1 - \frac{t}{S_{n+1}^{\tilde{r}}} \right)^{-\frac{1}{m-1}} W_{S_{n+1}^{\tilde{r}},\tilde{r}}(x)   \qquad \text{for a.e. } (x,t) \in B_R \times \left[ 0, S_{n+1}^{\tilde{r}} \right) .
\end{equation} 
Thanks to \eqref{eq:sub-sol}--\eqref{eq:sup-sol}, if we let $ R \to \infty $ classical weak$^\ast$-convergence arguments in $ L^\infty_{\mathrm{loc}} $ guarantee that $ u_R $ converges to a solution $ u^\star $ of \eqref{e64}, with $ u_0 \equiv u(\cdot,t_n) $ and $T \equiv S_{n+1}^{\tilde{r}} $, which still complies with the bounds \eqref{eq:sub-sol}--\eqref{eq:sup-sol} in $ M \times \left[ 0,S_{n+1}^{\tilde{r}} \right) $. Since $ \tilde{r} $ can be taken arbitrarily large, by letting $ \tilde{r} \to \infty $ and recalling \eqref{e40-limsup-final} we deduce that $ u^\star $ is in $ L^\infty\left([0,S_{n+1}]; X_{\mathrm{log}} \right) $ e.g.~up to setting 
\begin{equation*}\label{def-sn}
S_{n+1} = \left(\frac{\tilde{a}}{2}\right)^{m-1} \left[ \limsup_{\rho(x)\to\infty} \, \frac{u(x,t_n)}{\left(\log \rho(x)\right)^{\frac{1}{m-1}}} \right]^{1-m} < T_{n+1} \, ,
\end{equation*}
and continues to satisfy \eqref{eq:sub-sol} in $ M \times \left[0,S_{n+1}\right] $. We have therefore proved that $u$ can be extended from $ [0,t_n] $ to $ [0,t_{n+1}]=[0,t_{n}+S_{n+1}] $ by letting $ u(\cdot,t) := u^\star(\cdot,t-t_n) $ for all $ t \in (t_n,t_{n+1}] $. Clearly such an extension lies in $ L^\infty\left([0,t_{n+1}] ; X_{\mathrm{log}} \right) $. Furthermore, by virtue of \eqref{eq:sub-sol} and \eqref{choice-T-2} there holds
\begin{equation}\label{eq:sol-tn1}
\liminf_{\rho(x)\to\infty} \, \frac{u(x,t_{n+1})}{\left( \log \rho(x) \right)^{\frac{1}{m-1}}} \ge \left( 1-\epsilon_n \right) \left( 1 - \frac{S_{n+1}}{T_{n+1}} \right)^{-\frac{1}{m-1}} \liminf_{\rho(x)\to\infty} \, \frac{u(x,t_n)}{\left(\log \rho(x)\right)^{\frac{1}{m-1}}} > 0 \, .
\end{equation}
By exploiting \eqref{eq:sol-tn1} (at $ n \equiv n-1 $), \eqref{choice-T} and \eqref{ch-eps},  for all $ n \ge 1 $ we obtain
\begin{equation}\label{choice-T-iter}
\begin{aligned}
{T}_{n+1} 
\le & \, \frac{1}{\left( 1-\epsilon_n \right)^{m-1}} \left( 1- \frac{S_n}{T_n} \right) \left( \frac{\hat{a}}{1-\epsilon_{n-1}} \right)^{m-1} \left[ \liminf_{\rho(x)\to\infty} \frac{u(x,t_{n-1})}{\left( \log \rho(x) \right)^{\frac{1}{m-1}}} \right]^{1-m}  \\
= & \, \frac{T_n-S_n}{\left( 1-\epsilon_n \right)^{m-1}} \le T_n-S_n + \frac{T_1}{2^n} \, .
\end{aligned}
\end{equation}
Hence, \eqref{choice-T-iter} yields
\begin{equation*}\label{eq:serie-1}
\tau := \sum_{k=0}^{\infty} S_k  \le 2 \, T_1 \quad \Longrightarrow \quad \lim_{n\to\infty} S_n = 0 \quad \Longleftrightarrow \quad \lim_{n \to \infty} \, \limsup_{\rho(x)\to\infty} \, \frac{u(x,t_n)}{\left(\log \rho(x)\right)^{\frac{1}{m-1}}}  = +\infty \, . 
\end{equation*}
We have therefore constructed a solution $ u $ of \eqref{e64} which belongs to $ L^\infty\left([0,s]; X_{\mathrm{log}} \right) $ for all $ s<\tau $ and satisfies \eqref{eq:final-est-state} with $ T=\tau $. 
\end{proof}

\begin{oss}[Blow-up in $ X_{\mathrm{log}} $ {vs}.~pointwise blow-up]\label{ptwse-blow}\rm
As mentioned in the Introduction, in \cite[Theorem 2.5]{GMPlarge} it was proved that for nonnegative initial data with critical growth there exists a time $T$ beyond which no nonnegative solution can be extended, referred to as \emph{maximal}. On the other hand, such result heavily relied on uniqueness, which here is not known if \eqref{assumption-sectional-intro} holds, since the latter is incompatible with \eqref{aa1}. In any case, Theorem \ref{opt1} establishes, without uniqueness assumptions, the exact analogue of formula (2.27) of \cite[Theorem 2.5]{GMPlarge}): we construct a solution whose $ \| \cdot \|_{\mathrm{log},r} $ norm ($ \forall r \ge 2 $) blows up at a finite time $T$. We are not able to rule out the possibility that such solution may be extended beyond $ T $, but it cannot lie in $ X_{\mathrm{log}} $ at all times.

The question whether a \emph{pointwise} blow-up also occurs has partially been answered in \cite{GMPlarge} in the case of subcritical curvatures: we refer to Theorem 2.7 there. This issue is strongly related to the existence of a positive \emph{global} solution, belonging to $ X_{\mathrm{log}} $, of the elliptic equation 
$$
(m-1) \, \Delta\!\left(\mathsf{W}^m\right)
 = \mathsf{W} \qquad \text{in } M \, ,
$$ 
which has been established in \cite[Lemma 5.5]{GMPlarge} on \emph{model manifolds}. We do not expand this point further here, but let us mention that analogues of \cite[Lemma 5.5 and Theorem 2.7]{GMPlarge} hold in the present framework as well, by means of similar computations. This ensures existence of solutions, on model manifolds having negative curvatures (according to \eqref{e1ce}) that go to infinity quadratically, exhibiting a pointwise blow-up at some time $T$ of the same type as the supersolution $ \overline{u} $ in \eqref{t1c}.
\end{oss}

\section{Uniqueness: proof of Theorem \ref{thmuniq}}\label{uniqueness}

The technique we exploit to prove uniqueness of solutions of \eqref{e64} belonging to $ X_{\mathrm{log}} $ relies on a well-established duality method, whose main ideas behind can be traced back (with no claim to completeness) to the pioneering works \cite{ACP,Pierre,BCP}. Our proof follows the lines of \cite[Proof of Theorem 2.3]{GMPlarge}: however, some nontrivial modifications are in order to handle unbounded solutions.

\begin{proof}[Proof of Theorem \ref{thmuniq}]
Throughout, $ T $ is to be understood as $ T-\varepsilon $, for arbitrary $ \varepsilon>0 $ (since solutions at the very time $ T $ may not exist). Moreover, we shall make the extra assumption that $ u , v \in C(M \times [0, T])$: if this is not the case, some technical modifications, which to the present purposes are inessential, have to be implemented (see the beginning of the proof of \cite[Theorem 2.3]{GMPlarge} for details). So, given any $R>0$ and $\xi\in C^\infty\!\left(\overline{B}_R \times [0, T]\right)$ with $ \xi=0 $ on $ S_R \times (0,T) $, since both $u$ and $v$ satisfy \eqref{q50} and are continuous, it is direct to show that 
\begin{equation}\label{n3}
\begin{aligned}
& \int_0^T \int_{B_R} \left[ (u - v) \, \xi_t + \left(u^m - v^m\right) \Delta \xi \right] d\mathcal{V} dt - \int_0^T \int_{S_R} \left(u^m-v^m\right) \frac{\partial \xi}{\partial \nu} \, d\sigma dt \\
= & \int_{B_R} \left[ u(x,T) - v(x,T) \right] \xi(x,T) \, d\mathcal{V}(x) \, ,
\end{aligned}
\end{equation}
where $\frac{\partial}{\partial \nu}$ is the outward normal derivative on $S_R$ and $ d \sigma  $ is the Hausdorff measure on $ S_R $. Let
\begin{equation}\label{n4}
a(x,t):=
\begin{cases}
\frac{u^m(x,t) - v^m(x,t)}{u(x,t)- v(x,t)} & \text{if } u(x,t)\neq v(x,t) \, , \\
0 & \text{if } u(x,t)=v(x,t) \, .
\end{cases}
\end{equation}
By virtue of \eqref{e20z} and an elementary computation, it is readily seen that 
\begin{equation}\label{n5}
0 \leq  a(x,t) \leq C_1  \, \log\!\left(2 + \rho(x)\right) \qquad \text{for a.e. } (x,t) \in M \times (0, T) \, ,
\end{equation}
for some constant $C_1>0$. After definition \eqref{n4}, we can rewrite \eqref{n3} as follows: 
\begin{equation*}
\begin{aligned}
& \int_0^T \int_{B_R} (u - v) \left(\xi_t + a \, \Delta \xi \right) d\mathcal{V} dt - \int_0^T \int_{S_R} \left(u^m-v^m\right) \frac{\partial \xi}{\partial \nu} \, d\sigma dt \\
= & \int_{B_R} \left[u(x,T)- v(x,T)\right] \xi(x,T) \, d \mathcal{V}(x) \, .
\end{aligned}
\end{equation*}
Now we take $R_0 \ge  2 $, which is meant to be fixed, and $R \ge  R_0+1 \ge 3 $, which will go to infinity eventually. Let us pick a sequence of regular approximations of $ a $ such that
\begin{equation}\label{n7}
\left\{a_{n, R}\right\}_{n \in \mathbb N} \equiv  \left\{a_n\right\}_{n\in \mathbb N} \subset C^\infty(M\times [0, T]) \qquad \text{and} \qquad a_n>0 \quad \text{in}\ B_R  \quad \text{for every } n \in \mathbb N \, .
\end{equation}
Accordingly, we denote by $\xi_n$ the (regular) solution of the backward parabolic problem
\begin{equation}\label{n9}
\begin{cases}
\left( \xi_n \right)_t  + a_n \, \Delta \xi_n = 0 & \text{in } B_R \times (0, T) \, , \\
\xi_n = 0 & \text{on } \partial B_R \times (0, T) \, , \\
\xi_n = \omega & \text{on } B_R \times \{T\} \, ,
\end{cases}
\end{equation}
where $ \omega $ is an arbitrary final datum satisfying
\begin{equation}\label{n8}
\omega\in C^\infty_c(M) \, , \qquad 0 \leq \omega\leq 1 \quad \text{in } M \, , \qquad \omega =0 \quad \text{in } M \setminus B_{R_0} \, .
\end{equation}
If we plug $\xi \equiv \xi_n$ in \eqref{n3} and exploit \eqref{n9}, we obtain:
\begin{equation}\label{n14}
\begin{aligned}
& \int_0^T \int_{B_R} \left(u - v\right) \left(a- a_n\right) \Delta \xi_n \, d\mathcal{V} dt - \int_0^T \int_{S_R} \left( u^m-v^m \right)\frac{\partial \xi_n}{\partial \nu} \, d\sigma dt \\
= &  \int_{B_R} \left[ u(x,T)- v(x,T) \right]  \omega(x) \, d\mathcal{V}(x) \, .
\end{aligned}
\end{equation}
Let us label the integral quantities in the l.h.s.~of \eqref{n14}:
\begin{equation}\label{n15}
I_{n}(R) := \int_0^T \int_{B_R} \left(u-v\right) \left(a- a_n\right) \Delta \xi_n \, d\mathcal{V} dt \, ,
\end{equation}
\begin{equation}\label{n16}
J_{n}(R) := - \int_0^T \int_{S_R} \left( u^m-v^m \right)\frac{\partial \xi_n}{\partial \nu} \, d\sigma dt \, .
\end{equation}
Thanks to the growth hypothesis \eqref{e20z}, which holds both for $ u$ and $v$, there follows
\begin{equation}\label{n50aa}
\begin{aligned}
\left| J_n(R) \right|  \leq & \,  T \, \mathrm{meas}(S_R) \sup_{S_R \times (0, T)} \left|\frac{\partial \xi_n}{\partial \nu}\right|\sup_{S_R\times(0, T)} \left| u^m - v^m \right| 
\leq  C \left( \log R \right)^{\frac m{m-1}} \mathrm{meas}(S_R) \sup_{S_R\times(0, T)} \left|\frac{\partial \xi_n}{\partial \nu}\right| ,
\end{aligned}
\end{equation}
where $ \mathrm{meas}(S_R) = \sigma( S_R) $ and $ C>0 $ is a suitable constant independent of $ n $, $R$ and $R_0$. It is therefore crucial to provide an upper estimate for the normal derivative of $ \xi_n $ on $ S_R\times(0, T) $.
To this aim, we make the additional assumption that for all $R \ge R_0+1$ there exists $n_0=n_0(R)$ such that, for every $n\in \mathbb N $ with $ n \ge n_0$, the sequence $ \{ a_n \} $ satisfies
\begin{equation}\label{n20}
a_n \leq C_2 \, \log( 2+ \rho(x)  ) \qquad \forall (x,t) \in B_R \times (0, T) \, ,
\end{equation}
$C_2>0$ being another constant depending only on the constant $ C_1 $ in \eqref{n5}. This can be done with no loss of generality: one proceeds as in the corresponding part of the proof of \cite[Theorem 2.3]{GMPlarge}. 

\smallskip

{\bf Claim:} There exists $K>0$, depending only on $C_2$, such that the function 
\begin{equation}\label{n21}
\eta(x,t)\equiv \eta(\rho(x), t):= \lambda \, \exp\left\{-\frac{K}{2 T- t} \, \frac{\rho(x)^2}{\log\rho(x)}\right\} \qquad \forall(x,t)\in M \setminus B_{R_0} \times [0, T] 
\end{equation}
satisfies, for all $\lambda>0$,
\begin{equation}\label{n33}
\eta_t + a_n \, \Delta \eta \leq 0 \qquad \text{in}\ \left( B_{R} \setminus \overline{B}_{R_0} \right) \times (0, T) \, .
\end{equation}
Indeed, for all $\rho \ge R_0 $ and $ t \in (0,T)$, we have: 
\begin{equation}\label{n22}
\eta_t(\rho, t) = - \frac{ K}{(2T-t)^2} \, \frac{\rho^2}{\log \rho} \, \eta(\rho, t) < 0 \, ,
\end{equation}
\begin{equation}\label{n23}
\eta_\rho(\rho, t) = - \frac{ K }{2T-t} \, \frac{\rho \left( 2\log \rho - 1 \right)}{\left( \log\rho \right)^2} \, \eta(\rho, t) < 0
\end{equation}
and
\begin{equation*}
\eta_{\rho\rho}(\rho,t) = - \frac{K \, \eta(\rho,t)}{(2T-t) \left( \log \rho \right)^4 } \left[ - \frac K{2T-t}\, \rho^2 \left(2\log \rho -1\right)^2 + 2 \left( \log \rho \right)^3 - 3 \left( \log \rho \right)^2 + 2 \log \rho \right] .
\end{equation*}
Now we observe that $R_0 \ge 2 $ entails
\begin{equation}\label{n23-bis}
2 \left( \log \rho \right)^3 - 3 \left( \log \rho \right)^2 + 2 \log \rho \ge 0 \qquad \forall \rho \ge R_0 \, .
\end{equation}
Thanks to \eqref{n22} and the fact that $ a_n \ge 0 $, in order to ensure the validity of \eqref{n33} it is not restrictive to assume that $ \Delta \eta (\rho,t) > 0 $.
Hence, upon recalling \eqref{e1}, \eqref{e5}, \eqref{n23}, \eqref{n23-bis} and \eqref{n20}, there holds
\begin{equation}\label{n26}
\begin{aligned}
\eta_t(\rho, t) + a_n\, \Delta \eta(\rho, t)
\leq & \, \eta_t(\rho, t) + C_2 \, \log\!\left(2+\rho\right) \Delta \eta(\rho,t) \\
\leq & -\frac{K}{(2T-t)^2} \, \frac{\rho^2}{\log \rho}  \, \eta(\rho,t) \left[ 1 - K \, C_2 \, \log\!\left(2+\rho\right)\frac{\left(2\log \rho -1\right)^2}{\left(\log \rho\right)^3} \right]
\end{aligned}
\end{equation}
in $ \left( B_R \setminus \overline{B}_{R_0} \right) \times (0,T) $. It is clear that one can choose $ K=K(C_2)>0 $ so small that the r.h.s.~of \eqref{n26} is negative for all $ \rho > R_0 \ge 2 $, thus the claim has been established. 

\smallskip

From \eqref{n21}, we have that 
\begin{equation}\label{n33-bis}
\lambda \geq \| \omega\|_\infty \, \exp \left\{\frac K T \, \frac{R_0^2}{\log R_0} \right\}
\qquad \Longrightarrow \qquad
\eta \geq \| \omega\|_\infty \quad \text{on } \partial B_{R_0} \times (0, T) \, .
\end{equation}
Therefore, by virtue of \eqref{n9}, \eqref{n8}, \eqref{n33}, \eqref{n33-bis}, the fact that $ \eta \ge 0 $ and $ \xi_n \le \| \omega \|_\infty $, we can assert that $\eta$ is a supersolution of the following problem:
\begin{equation*}\label{n9b}
\begin{cases}
w_t  + a_n \, \Delta w = 0 & \text{in } \left(B_R \setminus \overline{B}_{R_0} \right) \times (0,T) \, , \\
w = 0 &  \text{on } \partial B_R \times (0, T) \, , \\
w = \xi_n & \text{on } \partial B_{R_0} \times (0, T) \, ,\\
w = 0 & \text{in } \left[ B_R \setminus \overline{B}_{R_0} \right] \times \{ T \} \, ,
\end{cases}
\end{equation*}
of which $ \xi_n $ is a solution. As a consequence, by arguing as in the proof of \cite[Theorem 2.3]{GMPlarge}, we end up with the estimate
\begin{equation*}\label{e2lu}
\left|\frac{\partial \xi_n(x,t)}{\partial \nu}\right| \le \frac{N-2}{R^{N-1}} \, \frac{R^{N-2}\,(R-1)^{N-2}}{R^{N-2} - (R-1)^{N-2}} \, \eta(R-1, 0) \qquad \forall (x,t) \in \partial B_R \times (0,T)
\end{equation*}
in the case $ N \ge 3 $, and with the estimate
\[
\left|\frac{\partial \xi_n(x,t)}{\partial \nu}\right| \le \frac{\eta(R-1, 0)}{R\left[\log(R)-\log(R-1)\right]} \qquad \forall (x,t) \in \partial B_R \times (0,T)
\]
in the case $ N \ge 2 $. Hence, there exists a constant $ \widehat C>0 $ (independent of $R $) such that 
\begin{equation}\label{e30aa}
\left|\frac{\partial \xi_n(x,t)}{\partial \nu}\right| \le \widehat C \, \lambda \, \exp\left\{-\frac K {2T} \, \frac{(R-1)^2}{\log(R-1)}\right\}  \qquad \forall (x,t) \in \partial B_R \times (0,T) \, .
\end{equation}
By standard surface-measure comparison results with model manifolds (see e.g.~\cite[Section 2.2]{GMPlarge}), it is direct to check that hypothesis \eqref{aa1} yields
\begin{equation}\label{e52z}
\operatorname{meas}(S_R) \leq \exp \left\{C_M \, \frac{R^2}{\log R} \right\} ,
\end{equation}
for another constant $ C_M=C_M(C_o,N)>0$. So, by combining \eqref{n50aa}, \eqref{e30aa} and \eqref{e52z}, we obtain:
\begin{equation*}
\left| J_n(R) \right| \leq C \, \widehat{C} \, \lambda \left( \log R \right)^{\frac m{m-1}} \exp\left\{C_M \, \frac{R^2}{\log R} - \frac K {2T} \, \frac{(R-1)^2}{\log(R-1)}\right\}  ,
\end{equation*}
whence
\begin{equation}\label{n52}
\limsup_{R\to \infty} \, \limsup_{n\to \infty} \left| J_n(R) \right| =  0
\end{equation}
provided $ T < K/(2 C_M) $.
Note that the constant $C_1$ of \eqref{n5}, on which $K$ depends, in principle depends on $T$; however, by its definition, it stays bounded as $ T \downarrow 0 $. As concerns the quantity $ I_n(R) $, one can reason exactly as in the final part of the proof of \cite[Theorem 2.3]{GMPlarge} and construct the sequence $ \{ a_n \} $ in such a way that \eqref{n7}, \eqref{n20} hold and
\begin{equation}\label{n53}
\limsup_{n \to \infty} \left| I_n(R) \right| = 0 \qquad \forall R \ge R_0 + 1 \, .
\end{equation}
Indeed, the technique used there is purely local (i.e.~curvature conditions at infinity do not affect it). Finally, if we let first $ n \to \infty $ and then $ R \to \infty $ in \eqref{n14}, by taking advantage of \eqref{n15}, \eqref{n16}, \eqref{n52} and \eqref{n53}, we end up with the identity
\begin{equation}\label{q11}
\int_{M} \left[ u(x,T) - v(x,T) \right] \omega(x) \, d\mathcal{V}(x) = 0 \, .
\end{equation}
Because $R_0$ can be taken arbitrarily large (the choice of $T$ is independent of $R_0$) and $ \omega $ is any regular function subject to \eqref{n8}, from \eqref{q11} we infer that $u(\cdot,T)=v(\cdot,T)$. The only constraint we still have to remove is that $T$ must be small enough: nevertheless, such restriction can be overcome by applying the above argument a finite number of times.
\end{proof}

The same technique of proof of Theorem \ref{thmuniq} yields a comparison principle.
\begin{cor}\label{comppr}
Let assumptions \eqref{H}-\textnormal{(i)} and \eqref{aa1} hold. Let $u$ and $v$ be a subsolution and a supersolution, respectively, of problem \eqref{e64} associated with the same initial datum $ u_0 \in X_{\mathrm{log}} $, up to the same time $T>0$. In addition, suppose that both $u$ and $v$ comply with the bound \eqref{e20z}. Then $ u \leq v $ a.e.~in $ M \times (0, T)$.
\end{cor}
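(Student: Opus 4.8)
The plan is to re-run the duality scheme of the proof of Theorem~\ref{thmuniq}, the only new ingredient being that, working with a subsolution $u$ and a supersolution $v$, one may insert only \emph{nonnegative} test functions. As in that proof, $T$ is understood as $T-\varepsilon$ for arbitrary $\varepsilon>0$, and $u,v$ are assumed continuous up to the inessential technical modifications recalled there. First I would fix $R>0$, introduce the coefficient $a$ exactly as in \eqref{n4} (now with $u$ the subsolution and $v$ the supersolution), note that $a\ge 0$ still holds because $s\mapsto s^m$ is increasing, and observe that \eqref{n5} remains a consequence of \eqref{e20z}. Subtracting the very weak inequality \eqref{q50} for $u$ (with ``$\le$'', nonnegative test functions) from the one for $v$ (with ``$\ge$''), and performing the same time-regularization and integration-by-parts steps that yield \eqref{n3}, I expect to arrive, for every $\xi\in C^\infty\!\left(\overline{B}_R\times[0,T]\right)$ with $\xi\ge 0$ and $\xi=0$ on $S_R\times(0,T)$, at the one-sided analogue of \eqref{n3}
\[
\int_{B_R}\left[u(x,T)-v(x,T)\right]\xi(x,T)\,d\mathcal{V}(x)\le\int_0^T\!\!\int_{B_R}\left[(u-v)\,\xi_t+\left(u^m-v^m\right)\Delta\xi\right]d\mathcal{V}dt-\int_0^T\!\!\int_{S_R}\left(u^m-v^m\right)\frac{\partial\xi}{\partial\nu}\,d\sigma dt\,,
\]
where the initial term has cancelled because $u$ and $v$ share the datum $u_0$. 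The sign here is precisely the one that makes the subsolution sit below the supersolution; checking this is the one genuinely new piece of bookkeeping with respect to Theorem~\ref{thmuniq}.

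Next I would take $\xi\equiv\xi_n$, the solution of the backward problem \eqref{n9} with smooth coefficient $a_n>0$ and final datum $\omega$ subject to \eqref{n8}. Since $\omega\ge 0$, $\xi_n$ vanishes on $\partial B_R\times(0,T)$ and $a_n>0$, the parabolic maximum principle gives $0\le\xi_n\le\|\omega\|_\infty$, so $\xi_n$ is an admissible nonnegative test function and is amenable to the barrier comparison with $\eta$ of \eqref{n21}. Plugging $\xi_n$ into the displayed inequality and using $\left(\xi_n\right)_t+a_n\Delta\xi_n=0$, the bulk term collapses to $I_n(R)$ of \eqref{n15} and the boundary term to $J_n(R)$ of \eqref{n16}, whence
\[
\int_{B_R}\left[u(x,T)-v(x,T)\right]\omega(x)\,d\mathcal{V}(x)\le I_n(R)+J_n(R)\,.
\]
From here every estimate of the proof of Theorem~\ref{thmuniq} applies verbatim, since none of them uses that $u,v$ solve an equation rather than a differential inequality: the construction of $\{a_n\}$ complying with \eqref{n7} and \eqref{n20} together with the purely local argument gives \eqref{n53}, while the barrier \eqref{n21} (which only needs $0\le\xi_n\le\|\omega\|_\infty$), the normal-derivative bound on $S_R$ and the surface-measure estimate stemming from \eqref{aa1} give \eqref{n52}, under the smallness requirement $T<K/(2C_M)$.

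Letting first $n\to\infty$ and then $R\to\infty$ in the last display, and recalling that $\omega$ is supported in $B_{R_0}$, I would conclude $\int_M\left[u(x,T)-v(x,T)\right]\omega(x)\,d\mathcal{V}(x)\le 0$ for every $\omega$ as in \eqref{n8}; since $R_0$ is arbitrary, this forces $u(\cdot,T)\le v(\cdot,T)$ a.e. The provisional restriction $T<K/(2C_M)$ is removed by iterating the argument on consecutive short time intervals exactly as at the end of the proof of Theorem~\ref{thmuniq}; the only change is that after the first step the two data at the left endpoint of the interval are no longer equal but merely ordered, so the corresponding initial term $\int_{B_R}\left[u-v\right]\xi_n\,d\mathcal{V}$ at that endpoint reappears in the estimate, but it is nonpositive because $\xi_n\ge 0$, hence harmless. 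The hardest point I anticipate is precisely this sign analysis: ensuring that the passage from the very weak inequalities to the one-sided version of \eqref{n3}, and the subsequent insertion of the nonnegative adjoint solutions $\xi_n$, produce the inequality in the direction $u\le v$ and not the reverse one; once that is secured, the rest is a faithful transcription of the proof of Theorem~\ref{thmuniq}.
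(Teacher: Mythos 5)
Your proposal is correct and is exactly what the paper intends: the paper proves Corollary~\ref{comppr} by the remark that ``the same technique of proof of Theorem~\ref{thmuniq} yields a comparison principle,'' and your write-up supplies precisely the needed bookkeeping (one-sided analogue of \eqref{n3} for nonnegative test functions, nonnegativity of $\xi_n$ from the maximum principle, and the sign of the initial term in the iteration on short time intervals). No gaps.
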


\medskip

\noindent{\textbf{Acknowledgement.}} The authors thank the ``Gruppo Nazionale per l'Analisi Matematica, la Pro\-babilit\`a e le loro Applicazioni'' (GNAMPA) of the ``Istituto Nazionale di Alta Matematica'' (INdAM, Italy). They were both partially supported by the GNAMPA Project ``Equazioni Diffusive Non-lineari in Contesti Non-Euclidei e Disuguaglianze Funzionali Associate''.

\medskip

\end{document}